\documentclass[english]{extarticle}
\usepackage[T1]{fontenc}
\usepackage[latin9]{inputenc}
\usepackage{geometry}
\geometry{verbose,tmargin=2cm,bmargin=2cm,lmargin=3cm,rmargin=3cm}
\usepackage{color}
\usepackage{babel}
\usepackage{float}
\usepackage{mathrsfs}
\usepackage{amsmath}
\usepackage{amsthm}
\usepackage{amssymb}
\usepackage{graphicx}
\usepackage[numbers]{natbib}
\usepackage[unicode=true,pdfusetitle,
 bookmarks=true,bookmarksnumbered=false,bookmarksopen=false,
 breaklinks=false,pdfborder={0 0 1},backref=page,colorlinks=true]
 {hyperref}
\hypersetup{
 linkcolor=blue, urlcolor=marineblue, citecolor=blue, pdfstartview={FitH}, hyperfootnotes=false, unicode=true}

\makeatletter
\numberwithin{equation}{section}
\numberwithin{figure}{section}
\theoremstyle{plain}
\newtheorem{thm}{\protect\theoremname}
\theoremstyle{plain}
\newtheorem{prop}[thm]{\protect\propositionname}
\theoremstyle{plain}
\newtheorem{lem}[thm]{\protect\lemmaname}
\newcommand{\RR}{\mathbb{R}}
\newcommand{\CC}{\mathbb{C}}
\newcommand{\si}{\sigma}
\newcommand{\D}{\Delta}
\newcommand{\dd}{\delta}

\let\myFoot\footnote
\renewcommand{\footnote}[1]{\myFoot{#1\vspace{3mm}}}
\usepackage{lmodern}
\usepackage[T1]{fontenc}
\usepackage{concmath}
\usepackage{ae,aecompl}
\usepackage[T1]{fontenc}
\usepackage{amsmath}
\usepackage{graphicx}

\usepackage{tikz}

\makeatother

\providecommand{\lemmaname}{Lemma}
\providecommand{\propositionname}{Proposition}
\providecommand{\theoremname}{Theorem}

\begin{document}
\title{On the finiteness of moments of the exit time of planar Brownian motion from comb domains}

\author{~Maher Boudabra and Greg Markowsky\\
{\small {\tt maher.boudabra@monash.edu} ~~ {\tt greg.markowsky@monash.edu }}\\
{\small Department of Mathematics,  Monash University, Australia}}

\maketitle

\begin{abstract}
A comb domain is defined to be the entire complex plain with a collection of vertical slits, symmetric over the real axis, removed. In this paper, we consider the question of determining whether the exit time of planar Brownian motion from such a domain has finite $p$-th moment. This question has been addressed before in relation to starlike domains, but these previous results do not apply to comb domains. Our main result is a sufficient condition on the location of the slits which ensures that the $p$-th moment of the exit time is finite. Several auxiliary results are also presented, including a construction of a comb domain whose exit time has infinite $p$-th moment for all $p \geq 1/2$. 
\end{abstract}

Keywords: Planar Brownian motion, exit time.

2010 Mathematics subject classification: 60J65, 30E99.

\section{Introduction and statement of main result}
Let $(x_{n})_{n\in\mathbb{Z}}$ be an increasing sequence
of distinct real numbers without accumulation point in $\RR$, let $(b_{n})_{n\in\mathbb{Z}}$ be an associated sequence of positive numbers, and let $\mathscr{M}_{x}$ be the domain 
\[
\mathscr{M}_{x}:=\mathbb{C}\setminus\bigcup_{n\in\mathbb{Z}}I_{n}
\]
where $I_{n}:=\{x_{n}\}\times([b_n,+\infty)\cup(-\infty,-b_n])$.
We shall informally refer to $\mathscr{M}_{x}$ as a {\it comb domain}. 
\begin{figure}[H]
\centering{}\includegraphics[width=8cm,height=8cm,keepaspectratio]{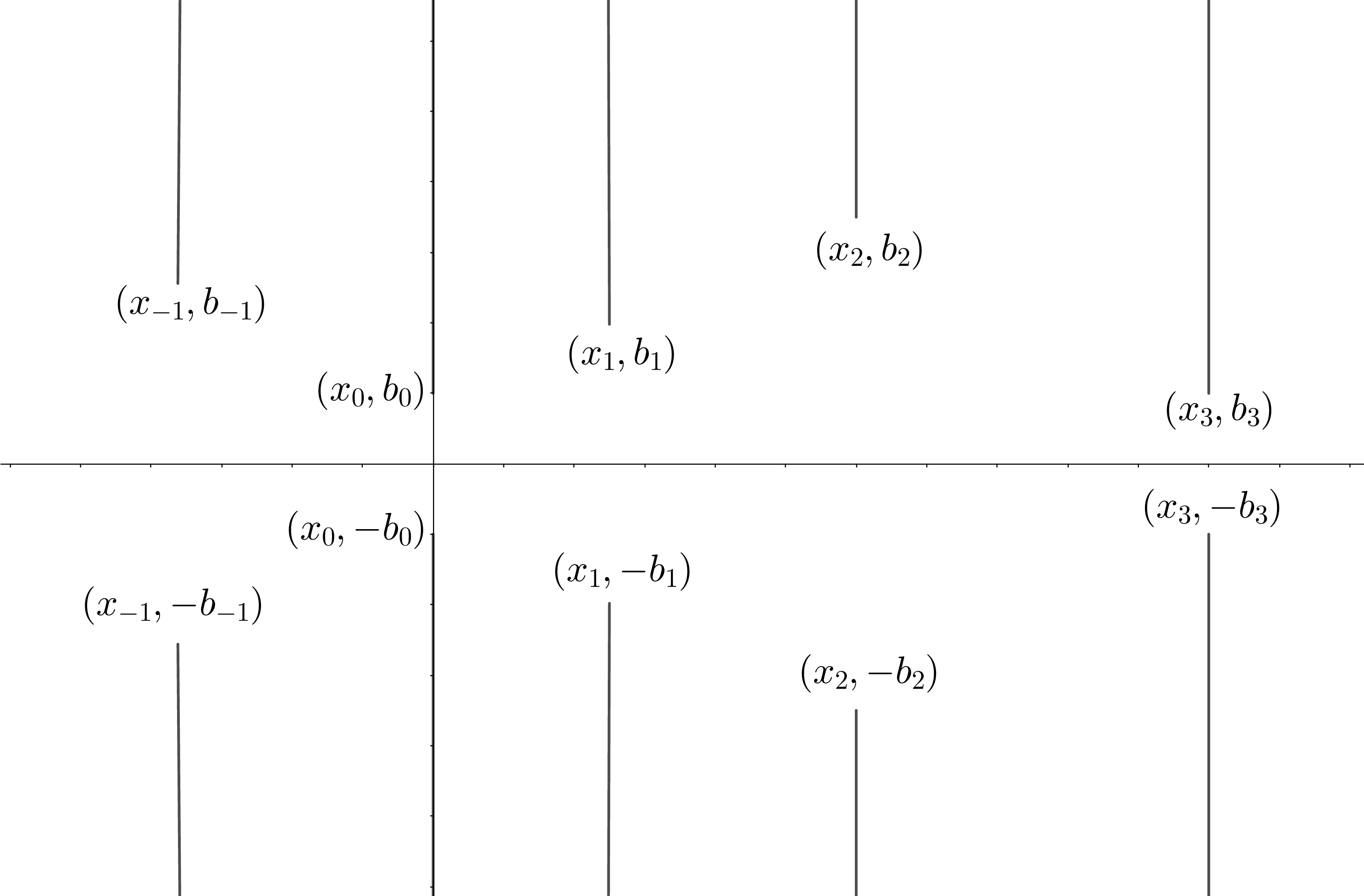}\caption{Illustration of a comb domain.}
\end{figure}
 We consider a planar Brownian motion $Z_{t}$ and denote by $\tau_\Omega$
its exit time from a given domain $\Omega$. The question we will investigate in this paper is to find conditions on the sequences $(x_{n})_{n\in\mathbb{Z}}$ and $(b_{n})_{n\in\mathbb{Z}}$ which imply that $E(\tau_{\mathscr{M}_{x}}^p)<\infty$ for a given $p \in (0,\infty)$. We will derive a sufficient condition for the moment to be finite, but before stating our results, we discuss a bit of motivation for this question.

The moments of $\tau_{\Omega}$ have special importance in two dimensions, as they carry a great deal of analytic and geometric information about the domain $\Omega$. The first major work in this direction seems to have been by Burkholder in \citep{burkholder1977exit}, where it was proved among other things that finiteness of the $p$-th Hardy norm of $\Omega$ is equivalent to finiteness of the $\frac{p}{2}$-th moment of $\tau_{\Omega}$. To be precise, for any simply connected domain $\Omega$
let

$$
{\rm H}(\Omega)=\sup \{p > 0: E((\tau_{\Omega})^p) < \infty\};
$$

note that ${\rm H}(\Omega)$ is proved in \cite[p. 183]{burkholder1977exit} to be exactly
equal to half of the Hardy number of $\Omega$, as defined in \citep{hansen}, which is

$$
{\widetilde{\rm H}}(\Omega)=\sup \{q > 0: \lim_{r \nearrow 1} \int_{0}^{2\pi} |f(re^{i\theta})|^qd \theta < \infty\},
$$

where $f$ is a conformal map from the unit disk onto $\Omega$. This equivalence was used in \cite[p. 183]{burkholder1977exit} to show for instance that $H(W_\alpha) = \frac{\pi}{2 \alpha}$, where $W_\alpha = \{0<Arg(z)<\alpha\}$ is an infinite angular wedge with angle $\alpha$. In fact, coupled with the purely analytic result \cite[Thm 4.1]{hansen} this can be used to determine ${\rm H}(\Omega)$ for any starlike domain $\Omega$ in terms of the aperture of $\Omega$ at $\infty$, which is defined to be the limit as $r \to \infty$ of the quantity
$\alpha_{r,\Omega} = \max \{m(E): E \mbox{ is a subarc of } \Omega \cap \{|z|=r\}\}$; it is not hard to see that this limit always exists for starlike domains. \citep{markowsky} contains a detailed discussion of this, as well as a version of the Phragm\'en-Lindel\"of principle that makes use of the quantity ${\rm H}(\Omega)$. Furthermore, the quantity $E((\tau_{\Omega})^p)$ provides us with an estimate for the tail probability $P(\tau_{\Omega} > \delta)$: by Markov's inequality, $P(\tau_{\Omega} > \delta) \leq \frac{ E((\tau_{\Omega})^p)}{\delta^p}$.
We should also mention that the case $p=1$ is naturally of special interest, and has produced a literature too large to describe here; the case of general $p$ has attracted somewhat less interest, nevertheless the reader interested in other results relating the $p$-th moments of Brownian exit time with the geometry of domains is referred to \cite{banuelos2020bounds, maxpmoment,burchard2001comparison,davis1994moments,dryden2017exit,  helmes1999computing, helmes2001computing,  hurtado2012comparison,hurtado2016estimates,kim2020quantitative, kinateder1999variational, kinateder1998exit, li2003first,mcdonald2013exit, mendez}.


On the other hand, comb domains and their analogues have appeared in a number of recent papers on various topics. One of the most striking instances is in the recent work \citep{gross2019conformal} by Gross, in which the following question was posed and answered: given a measure $\mu$ on $\RR$ with finite second moment, find a simply connected domain $U$ in $\CC$ such that the real part of the random variable $Z_{\tau_U}$ has the distribution $\mu$. If $\mu$ is a discrete distribution, then Gross' construction yields a comb domain. Other examples include \citep{markowsky2018remark}, in which a similar domain was used in order to construct a stopping time related to the winding of Brownian motion, and \citep{karafyllia2019property,karafyllia2019relation, karafyllia2019hardy}, in which similar domains were used as counterexamples to several conjectures concerning harmonic measure posed in \cite{betsakos1998harmonic,betsakos2001geometric}. Note that in Gross' paper in particular (see also \citep{boudabra2020new,boudabra2019remarks,mariano2020conformal}) the moments of the exit time are of importance, and yet it is not simple to show that they are finite for a given comb-like domain. 

Comb domains are never starlike, and therefore the previously established results on ${\rm H}(\Omega)$ do not apply to them. It is also not hard to see that the aperture at $\infty$ of comb domains need not exist. We have therefore needed to devise new methods in order to address this question.

Before discussing our results, let us make a few observations to clarify the problem. To begin with, it may seem that the starting point of the Brownian motion affects whether $E(\tau^p)$ is finite, however this is not so, as shown in \citep{burkholder1977exit}:

\begin{prop} \citep[p.13 (3.13)]{burkholder1977exit}
\label{prop:-For-,} If $U$ is a domain and $E_{a}(\tau_U)<\infty$ for some $a \in U$,
then $E_{w}(\tau_U)<\infty$ for any $w \in U$.
\end{prop}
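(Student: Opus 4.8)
The plan is to pass from the random variable $\tau_U$ to the function $u(z):=E_z(\tau_U)\in[0,+\infty]$ and to show that the set $F:=\{z\in U:u(z)<\infty\}$ is both open and closed in $U$. Since $U$ is a domain, hence connected, and since $a\in F$ by hypothesis, this forces $F=U$, which is exactly the assertion. All the real content is contained in a single local propagation statement, after which the globalization is purely topological.

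The propagation statement is: if $u(a)<\infty$ and $\overline{B}\subset U$, where $B=B(a,\rho)$ is a disk, then $u(w)<\infty$ for every $w\in B$. To prove it, let $S:=\tau_B$ be the exit time of $Z_t$ from the small disk $B$. Because $\overline{B}\subset U$, the motion must leave $B$ before it can leave $U$, so $S\le\tau_U$ almost surely and $Z_S\in\partial B\subset U$. Applying the strong Markov property at $S$ yields, for any starting point $w\in B$, the decomposition
\[
u(w)=E_w(S)+\int_{\partial B}u\,d\omega_w ,
\]
where $\omega_w$ is the exit distribution (harmonic measure) on $\partial B$ from $w$. The first term is finite, since the expected exit time from a bounded disk is finite. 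For the second term I compare $\omega_w$ with the harmonic measure $\omega_a$ from the centre: the same decomposition written at $a$ gives $\int_{\partial B}u\,d\omega_a\le u(a)<\infty$, while the Poisson kernel of the disk supplies the density $d\omega_w/d\omega_a=P(w,\cdot)$ with $P(w,\xi)=(\rho^2-|w-a|^2)/|\xi-w|^2$. Since $w$ is an interior point, $|\xi-w|\ge\rho-|w-a|>0$ for $\xi\in\partial B$, so $P(w,\xi)\le(\rho+|w-a|)/(\rho-|w-a|)=:C_w<\infty$. Hence $\int_{\partial B}u\,d\omega_w\le C_w\int_{\partial B}u\,d\omega_a<\infty$, and therefore $u(w)<\infty$.

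The globalization is now immediate. The lemma says precisely that $F$ is open, for around any $a\in F$ there is a disk whose closure lies in $U$ and which is entirely contained in $F$. The contrapositive shows that $U\setminus F$ is open too: if $u(z)=\infty$ yet every neighbourhood of $z$ met $F$, one could pick $w$ with $u(w)<\infty$ so close to $z$ that a disk about $w$ has its closure in $U$ and contains $z$; the lemma applied at $w$ would then give $u(z)<\infty$, a contradiction. Thus $F$ and $U\setminus F$ are both open, $F\neq\emptyset$, and connectedness of $U$ yields $F=U$.

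I expect the main obstacle to be the careful justification of the strong Markov decomposition together with the harmonic-measure comparison: one must check that $u$ is measurable and that the Poisson kernel is genuinely bounded, which hinges on the fact that $w$ is an interior point of $B$ and so avoids the boundary singularity. I note finally that the same argument delivers the general statement for every $p>0$: splitting $\tau_U^{\,p}\le c_p\bigl(S^p+(\tau_U\circ\theta_S)^p\bigr)$ via the elementary inequality $(s+t)^p\le c_p(s^p+t^p)$, and using that the exit time $S$ from a disk has finite moments of every order, the identical comparison propagates finiteness of $E_a(\tau_U^{\,p})$ to all starting points.
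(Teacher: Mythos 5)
Your proof is correct. Note, however, that the paper does not prove this proposition at all: it is quoted verbatim from Burkholder's paper (p.~13, (3.13)), so there is no internal argument to compare yours against; what you have supplied is a self-contained substitute for that citation. Your route --- propagating finiteness of $u(z):=E_z(\tau_U)$ from the centre of a disk to its interior points via the strong Markov decomposition $u(w)=E_w(\tau_B)+\int_{\partial B}u\,d\omega_w$, bounding $d\omega_w/d\omega_a$ by the Poisson kernel estimate $P(w,\xi)\le(\rho+|w-a|)/(\rho-|w-a|)$, and then concluding by connectedness of $U$ --- is standard and sound. Three remarks. First, the measurability of $u$, which you flag as the main obstacle, is routine: $u(z)=\int_0^\infty P_z(\tau_U>t)\,dt$ and $z\mapsto P_z(\tau_U>t)$ is Borel, so this is not a genuine gap. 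Second, your quantitative kernel bound is genuinely needed: a softer potential-theoretic argument (``$u$ is a nonnegative superharmonic function, hence finite somewhere implies finite a.e.'') would only give finiteness off a polar set, not at every point, so your choice of tool is the right one. Third, your closing observation that the argument extends to all $p>0$ via $(s+t)^p\le c_p(s^p+t^p)$ and the finiteness of all moments of the disk exit time is not idle: the paper states the proposition only for the first moment but immediately uses it to say things like ``$E(\tau_\Omega^p)<\infty$'' without specifying a starting point, so the general-$p$ version you sketch is what the paper actually relies on.
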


We may therefore make statements like "$E(\tau_{\Omega}^p)< \infty$" or "$E(\tau_\Omega^p)= \infty$" without specifying a starting point. We next note that exit times are monotonic with respect to domains, as the following proposition shows.

\begin{prop} \label{monotone}

\begin{itemize}
    \item  If $\Omega_1 \subset \Omega_2$ then 
\[
E(\tau_{\Omega_2}^{p})<\infty\,\Longrightarrow\,E(\tau_{\Omega_1}^{p})<\infty.
\]

    \item If $\Omega_n$ is an increasing sequence of domains (i.e. $\Omega_n \subseteq \Omega_{n+1}$) and $\Omega = \cup_{n=1}^\infty \Omega_n$, then $E(\tau_{\Omega_n}^p) \nearrow E(\tau_{\Omega}^p)$.
\end{itemize}

\end{prop}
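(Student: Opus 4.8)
The plan is to derive both parts from a single pathwise observation: exit times are monotone in the domain once the Brownian motion is started from a common point. Throughout I would fix a base point $a$ lying in the smallest domain under consideration (any $a\in\Omega_1$ in the first part, and any $a\in\Omega_1\subseteq\Omega_n\subseteq\Omega$ in the second) and run $Z_t$ from $a$ under $P_a$. This is legitimate because, by Proposition \ref{prop:-For-,}, finiteness of the moments is insensitive to the starting point, so it suffices to compare the two moments at the single shared point $a$.

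For the first part, the key remark is that $\Omega_1\subseteq\Omega_2$ forces $\Omega_2^c\subseteq\Omega_1^c$, so for $P_a$-almost every path the first hitting time of $\Omega_1^c$ cannot come after that of $\Omega_2^c$; equivalently, $\{t:Z_t\notin\Omega_1\}\supseteq\{t:Z_t\notin\Omega_2\}$ forces $\tau_{\Omega_1}\le\tau_{\Omega_2}$ almost surely. Raising to the $p$-th power (which preserves the inequality for $p>0$) and taking expectations gives $E_a(\tau_{\Omega_1}^p)\le E_a(\tau_{\Omega_2}^p)$, and the stated implication is immediate.

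For the second part I would first note that the same domination produces an increasing chain $\tau_{\Omega_1}\le\tau_{\Omega_2}\le\dots\le\tau_\Omega$ almost surely, so $\tau_{\Omega_n}$ converges pointwise to some limit $\tau_\infty\le\tau_\Omega$. The crux is the reverse inequality $\tau_\infty\ge\tau_\Omega$. To obtain it, fix any $t<\tau_\infty$; then $t<\tau_{\Omega_n}$ for some $n$, which means $Z_t\in\Omega_n\subseteq\Omega$. Since $t<\tau_\infty$ was arbitrary, the path remains in $\Omega$ throughout $[0,\tau_\infty)$, whence $\tau_\Omega\ge\tau_\infty$. Combining the two bounds yields $\tau_{\Omega_n}\nearrow\tau_\Omega$ almost surely, and because $x\mapsto x^p$ is increasing and continuous on $[0,\infty]$ we also get $\tau_{\Omega_n}^p\nearrow\tau_\Omega^p$.

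The conclusion $E(\tau_{\Omega_n}^p)\nearrow E(\tau_\Omega^p)$ then follows directly from the Monotone Convergence Theorem applied to the nonnegative increasing sequence $\tau_{\Omega_n}^p$; this remains valid when $E(\tau_\Omega^p)=\infty$, both sides then diverging to $+\infty$. The only genuinely delicate point is the verification that $\tau_\infty=\tau_\Omega$: here one must use that $\Omega$ is open and equals the union of the open sets $\Omega_n$, so that membership of $Z_t$ in $\Omega$ is always witnessed by membership in some finite-stage $\Omega_n$. I expect this openness-and-union bookkeeping, rather than the probabilistic content, to be the main thing requiring care.
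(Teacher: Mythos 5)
Your overall route (pathwise domination of exit times from a common starting point, then monotone convergence) is exactly what the paper intends; its own proof is a one-line remark that the first statement is trivial and the second follows from the monotone convergence theorem. However, your execution of the second part contains a genuine logical gap. You correctly identify the crux as the inequality $\tau_\infty \geq \tau_\Omega$, where $\tau_\infty = \lim_n \tau_{\Omega_n}$, but the argument you then give does not prove it: starting from $t < \tau_\infty$ and deducing $Z_t \in \Omega_n \subseteq \Omega$ yields the conclusion $\tau_\Omega \geq \tau_\infty$, which is precisely the trivial inequality you already obtained from $\Omega_n \subseteq \Omega$. So when you ``combine the two bounds,'' you are combining an inequality with itself, and the needed direction is never established. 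Your closing remark reveals the same confusion: it is not enough that membership of a single point $Z_t$ in $\Omega$ is witnessed at some finite stage, because to conclude $t < \tau_{\Omega_n}$ you need one index $n$ such that the \emph{entire} path segment up to time $t$ lies in $\Omega_n$, not a possibly different index for each time $s \leq t$.

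The correct argument for the missing direction runs as follows. Fix $t < \tau_\Omega$. Then the path segment $\{Z_s : 0 \leq s \leq t\}$ is contained in $\Omega$ and is compact, being the continuous image of $[0,t]$. The open sets $\Omega_n$ cover this compact set, so finitely many of them do, and since the sequence is increasing, a single $\Omega_n$ contains the whole segment. Hence $\tau_{\Omega_n} > t$, so $\tau_\infty = \sup_n \tau_{\Omega_n} > t$; letting $t \nearrow \tau_\Omega$ gives $\tau_\infty \geq \tau_\Omega$. Together with the trivial bound this yields $\tau_{\Omega_n} \nearrow \tau_\Omega$ almost surely, and the rest of your proof (continuity and monotonicity of $x \mapsto x^p$, then the monotone convergence theorem, valid also when $E(\tau_\Omega^p) = \infty$) goes through unchanged. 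In short: the probabilistic scaffolding of your proposal is right, but the one step you yourself flagged as delicate is the step that is actually missing, and it requires compactness of the path segment rather than pointwise finite-stage membership.
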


The proof of the first statement is trivial, and the second is a simple consequence of the monotone convergence theorem. This proposition allows us to clarify the problem a bit. Any comb domain is contained in a translation and dilation of the domain $U=\mathbb{C} \setminus \{0\}\times([1,+\infty)\cup(-\infty,-1])$, and, as the conformal map from the unit disk onto this domain is readily computed, a straightforward application of the aforementioned results by Burkholder in \cite{burkholder1977exit} shows that $E(\tau_U^p) < \infty$ if, and only if, $p<1/2$. It follows that $E(\tau_{\mathscr{M}_{x}}^p) < \infty$ for any comb domain $\mathscr{M}_{x}$, if $p<1/2$. The question is thus only interesting for $p \geq 1/2$, and we will concentrate on these values of $p$ in what follows. The following proposition, which in essence shows that the question we are addressing is reasonable, is proved in Section \ref{proofs}.

\begin{prop} \label{existinf}
For any $p \geq 1/2$, there is a comb domain $\mathscr{M}_{x}$ for which $E(\tau_{\mathscr{M}_{x}}^p)=\infty$.
\end{prop}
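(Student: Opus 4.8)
My plan is to place, inside a carefully chosen comb domain, a subdomain whose exit time already has an infinite moment, and then appeal to the first part of Proposition \ref{monotone}: if $W\subset\mathscr{M}_{x}$ and $E(\tau_{W}^{p})=\infty$, then $E(\tau_{\mathscr{M}_{x}}^{p})=\infty$. A useful preliminary reduction is that a \emph{single} comb domain can be made to work for every $p\geq 1/2$ at once: since $\tau^{p}\geq\tau^{1/2}$ on the event $\{\tau\geq 1\}$ whenever $p\geq 1/2$, one has $E(\tau^{p})\geq E(\tau^{1/2})-1$, so $E(\tau^{1/2})=\infty$ forces $E(\tau^{p})=\infty$ for all $p\geq 1/2$. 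Thus it suffices to construct one comb domain with $E(\tau_{\mathscr{M}_{x}}^{1/2})=\infty$, and the entire difficulty sits at the endpoint $p=1/2$.

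To see why the endpoint is the only hard case, note first what happens for $p>1/2$. Choosing $\alpha\in(\pi/(2p),\pi)$ and taking $W$ to be a wedge of opening $\alpha$ with horizontal axis of symmetry and vertex on the real axis, the condition $\alpha<\pi$ lets the wedge fit below slits whose heights grow at least linearly, $b_{n}\geq |x_{n}|\tan(\alpha/2)$, so that $W\subset\mathscr{M}_{x}$; and since Burkholder's value $H(W)=\pi/(2\alpha)$ is strictly less than $p$, we get $E(\tau_{W}^{p})=\infty$ and hence the claim. This route collapses exactly at $p=1/2$, where it would demand $\alpha=\pi$, i.e. a half-plane, which no comb can contain.

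For $p=1/2$ I would instead let the slit heights grow \emph{superlinearly}, $b_{n}=B(|x_{n}|)$ with $B(x)/x\to\infty$, so that $\mathscr{M}_{x}$ contains the horn $\Omega_{B}=\{x+iy:|y|<B(|x|)\}$, which opens up to aperture $\pi$ at infinity. The borderline nature of aperture $\pi$ heuristically places the Hardy number at $1/2$, but since $\Omega_{B}$ need not be starlike I would not rely on the quoted results; instead I would prove $E(\tau_{\Omega_{B}}^{1/2})=\infty$ directly, by a lower bound on the tail. Writing $Z_{t}=X_{t}+iY_{t}$ with $X,Y$ independent one-dimensional motions, survival in $\Omega_{B}$ up to time $t$ is controlled by the event that $Y$ performs an excursion of height of order $\sqrt{t}$ while $X$ is held beyond the level $B^{-1}(\sqrt{t})$, so that the barrier $B(|X|)$ dominates $|Y|$, for a duration of order $t$. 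Estimating this event through the no-return (arcsine) behaviour of $X$ and the excursion probability of $Y$ should yield
\[
P(\tau_{\Omega_{B}}>t)\ \gtrsim\ c\,t^{-1/2}\qquad(t\to\infty),
\]
and since $\int^{\infty}t^{-1/2}\,P(\tau_{\Omega_{B}}>t)\,dt\gtrsim\int^{\infty}t^{-1}\,dt=\infty$, this gives $E(\tau_{\Omega_{B}}^{1/2})=\infty$, as required.

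I expect the tail estimate at the critical exponent to be the main obstacle. The soft comparison principle that settles $p>1/2$ gives no information at $p=1/2$, because knowing $H(\Omega_{B})=1/2$ leaves open whether the $1/2$-moment is finite or infinite; this is genuinely the borderline case, corresponding analytically to whether the conformal map of $\Omega_{B}$ belongs to $H^{1}$. The delicate point is the competition between two effects: the horizontal coordinate must travel out to the wide part of the horn, where the barrier is high, while the heavy $t^{-1/2}$ tail is produced by the vertical coordinate's excursion there. I would control this by decomposing each path into a \emph{neck-escape} phase near the origin, where the domain is narrow, and a subsequent \emph{excursion} phase in the wide region, and by choosing the growth rate of $B$ (equivalently of the $b_{n}$) fast enough that the cost of escaping the neck does not degrade the $t^{-1/2}$ behaviour. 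Verifying that this decomposition indeed produces a tail no lighter than $t^{-1/2}$ is where the real work lies.
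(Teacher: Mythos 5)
Your reduction to the endpoint ($E(\tau^{1/2})=\infty$ forces $E(\tau^{p})=\infty$ for all $p\geq 1/2$) is correct and matches the paper's first step, the wedge argument for $p>1/2$ is sound, and the containment $\Omega_{B}\subset\mathscr{M}_{x}$ when $b_{n}=B(|x_{n}|)$ is valid. But the proposition then rests entirely on the tail bound $P(\tau_{\Omega_{B}}>t)\gtrsim t^{-1/2}$, which you assert rather than prove; as you yourself say, that is ``where the real work lies,'' so what you have is a plan, not a proof. Moreover the gap is genuinely delicate, not routine. If the neck escape is run through a tube of bounded width out to the level $M(t)=B^{-1}(\sqrt{t})$, its probability decays like $e^{-cM(t)}$, and since $M(t)\to\infty$ for every finite-valued $B$ (no matter how fast it grows, fast growth only makes $M(t)$ grow slowly, never keeps it bounded), the product $e^{-cM(t)}M(t)/\sqrt{t}$ is $o(t^{-1/2})$ and the claimed bound fails. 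To rescue it you must exploit the widening of the horn itself: the cost of crossing the funnel $\{0<x<2M,\ |y|<B(x)\}$ is governed by $\exp\bigl(-c\int_{0}^{2M}dx/B(x)\bigr)$ (extremal length, or a chain of comparably sized squares each crossed with probability bounded below), which stays bounded away from $0$ precisely when $\int^{\infty}dx/B(x)<\infty$, e.g.\ $B(x)\asymp x^{2}$. Only with that input, together with the arcsine factor $\min(1,M(t)/\sqrt{t})\geq c/\sqrt{t}$ and a check that the crossing occurs before time $t/2$, does your outline close. (Alternatively, with a bounded-width neck one can still get $P(\tau>t)\gtrsim 1/(\sqrt{t}\log t)$ by taking $B$ doubly exponential, which also suffices because $\int^{\infty}dt/(t\log t)=\infty$; but some such quantitative argument must be supplied, and none is.)

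You should also know that the paper proves the proposition with no tail estimates at all, and with bounded teeth ($b_{n}\equiv 1$) and growing gaps rather than bounded gaps and growing teeth. Its soft mechanism is: a vertical strip minus finitely many slits increases, as the strip widens, to a half-plane minus finitely many slits, which contains a half-plane and hence has infinite $1/2$-moment; by the monotone convergence statement in Proposition \ref{monotone}, one can therefore choose $x_{n+1}$ inductively so that the $n$-th stage domain $U_{n+1}=S_{0,x_{n+1}}\setminus\bigcup_{k\leq n}I_{k}$ satisfies $E_{1}(\tau_{U_{n+1}}^{1/2})>n+1$, and the comb $U=\bigcup_{n}U_{n}$ dominates every stage, giving $E(\tau_{U}^{1/2})=\infty$. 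If you want to keep your horn picture, that same trick (monotone convergence along an exhaustion by domains each containing a half-plane) would let you bypass the hard quantitative step entirely.
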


In order to state our main result, let us employ the notation $a_n = x_n-x_{n-1}$. Then we have the following.

\begin{thm} \label{bigguy}
Suppose $(x_{n})_{n\in\mathbb{Z}}$ is an increasing sequence (with $x_0 = 0$), and $(b_{n})_{n\in\mathbb{Z}}$ is an associated sequence of positive numbers, such that $\ell=\sup_n \Big(\frac{\max(b_{n-1},b_{n+1})}{\min(a_n,a_{n+1})}\Big) < \infty$. Then there is a number $\theta_0<1$, depending on $\ell$, such that, for any $p>0$, if 

\begin{equation} \label{keyeq}
    \sum_{j=1}^\infty (\max_{|n| \leq j} a_n^2) \theta_0^{j/p} < \infty,
\end{equation} 

then $E(\tau_{\mathscr{M}_{x}}^p)<\infty$.
\end{thm}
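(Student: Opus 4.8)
The plan is to exploit the product structure of planar Brownian motion: writing $Z_t = X_t + iY_t$, the coordinates $X_t$ and $Y_t$ are independent one-dimensional Brownian motions, and every slit of $\mathscr{M}_x$ lies on a vertical line $\mathrm{Re}\,z = x_n$. Consequently the exit time $\tau_{\mathscr{M}_x}$ can be read off from the joint behaviour of the two coordinates at the instants when $X$ hits one of the $x_n$: the open strip $\{x_{n-1} < \mathrm{Re}\,z < x_n\}$ between two consecutive lines contains no part of the boundary, so the motion decomposes into a succession of sojourns, each being a passage of the one-dimensional motion $X$ across an interval of length $a_n$, ending when $X$ reaches some line $\mathrm{Re}\,z = x_m$. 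At such an instant $t$ the motion is absorbed (a slit is hit) precisely when $|Y_t| \ge b_m$, and otherwise it crosses the gate $\{x_m\}\times(-b_m,b_m)$ into a neighbouring strip. I would first record that each sojourn time is the exit time of a one-dimensional Brownian motion from an interval of length $a_n$, hence has all moments finite with the scaling $E[S^p] \le C_p a_n^{2p}$ and exponential tails.

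The first substantive step is a uniform absorption estimate, where the hypothesis $\ell<\infty$ enters and which fixes the constant $\theta_0$. Consider the two-strip region $\{x_{n-1}<\mathrm{Re}\,z<x_{n+1}\}$ surrounding the gate at $x_n$, with the slit at $x_n$ removed; the motion leaves this region either by absorption on one of the three slits or by escaping through one of the two far gates $\{x_{n-1}\}\times(-b_{n-1},b_{n-1})$ and $\{x_{n+1}\}\times(-b_{n+1},b_{n+1})$. I would show, by estimating the harmonic measure of the far gates relative to the absorbing slits, that the probability of escape is bounded above by some $\theta_0=\theta_0(\ell)<1$, uniformly in $n$ and in the entry point on the central gate. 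The quantity $\ell$ is exactly the right scale-invariant control: it bounds the heights $b_{n-1},b_{n+1}$ of the escape windows against the widths $a_n,a_{n+1}$ of the strips, so that after rescaling each configuration to unit width the escape windows have uniformly bounded size and the harmonic-measure estimate is uniform. From this and the strong Markov property it follows that the probability of the horizontal range reaching the $j$-th gate decays geometrically, $P(\text{range}\ge j)\le C\theta_0^{\,j}$.

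It then remains to convert this geometric control of the range into the moment bound. Writing $\tau_{\mathscr{M}_x}=\sum_{i=1}^K S_i$ with $K$ the number of sojourns before absorption, the cell visited during the $i$-th sojourn has index of absolute value at most $i$, so its width is at most $\max_{|n|\le i}a_n$; combined with $P(K\ge i)\le\theta_0^{\,i}$ and the moment scaling of $S_i$, a subadditivity argument for $p\le 1$ and a Minkowski/Hölder argument for $p>1$ produce a bound of the form $E[\tau_{\mathscr{M}_x}^p]\le C_p\sum_j (\max_{|n|\le j}a_n^2)\,\theta_0^{\,\beta j}$ for a suitable $\beta$.

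Reorganising this series so that the spatial scale enters as $\max_{|n|\le j}a_n^2$ and the decay as $\theta_0^{\,j/p}$, as in \eqref{keyeq}, is the delicate point, and I expect it to be the main obstacle. For $p\ge 1$ the naive estimate already yields the required exponent, but for $1/2\le p<1$ the crude bound $P(K\ge i)\le\theta_0^{\,i}$ is too lossy and would force the stronger condition $\sum_j (\max_{|n|\le j}a_n^2)^p\,\theta_0^{\,j}<\infty$. The improvement must come from the independence of $X$ and $Y$: a long sojourn forces the crossing time $t_i=\sum_{k\le i}S_k$ to be large, which spreads $Y_{t_i}$ and thereby makes survival at that crossing \emph{less} likely. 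Quantifying this anticorrelation between the sojourn times and the survival event — so that large values of $\tau_{\mathscr{M}_x}$ are penalised at precisely the rate recorded by the exponent $j/p$ — is the crux of the argument, after which an appeal to Proposition \ref{monotone} and the reduction to $p\ge 1/2$ completes the proof.
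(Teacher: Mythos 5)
Your proposal has the same skeleton as the paper's proof: decompose $\tau$ along the successive passages of $\Re(Z_t)$ across the lines $\{\Re(z)=x_n\}$; observe that after $j$ crossings the relevant line index satisfies $|n|\le j$, so that on the survival event the $j$-th increment is a strip exit time whose $p$-th moment scales like $C_p(\max_{|n|\le j+1}a_n)^{2p}$ (the paper's Lemma \ref{pmoment_strip}); and prove a uniform per-crossing survival bound $P(\tau_j<\tau)\le\theta_0^j$ with $\theta_0=\theta_0(\ell)<1$. Your harmonic-measure/rescaling argument for the survival bound is the same estimate the paper obtains in Lemma \ref{probexitstrip} (the paper handles the dependence on the entry height $y\in(-b_n,b_n)$ by a reflection coupling, and reduces to the fixed rectangle $K^\ell_{-1,1}$ by symmetry and monotonicity), and your identification of $\ell$ as the scale-invariant quantity that makes the bound uniform in $n$ is exactly the role $\ell$ plays there. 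For $p\ge1$, Minkowski's inequality then gives $E(\tau^p)^{1/p}\le C\sum_j\theta_0^{j/p}\max_{|n|\le j+1}a_n^2$, which is the theorem; on that range your outline is correct and coincides with the paper's argument, modulo details you only sketch and the paper supplies.

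The gap you acknowledge on $1/2\le p<1$ is genuine as far as your write-up goes: the proposed ``anticorrelation between long sojourns and survival'' is never quantified, so you have not proved the theorem there. But you should know that the paper does not close this gap either. It applies the bound $E(\tau^{p})^{1/p}\leq\sum_{j}E((\tau_{j+1}-\tau_{j})^{p})^{1/p}$ for all $p>0$, and this is Minkowski's inequality, which fails (indeed reverses) for $p<1$ and nonnegative summands. The valid substitute, subadditivity of $t\mapsto t^p$, yields exactly the stronger hypothesis $\sum_j(\max_{|n|\le j}a_n^2)^p\theta_0^{j}<\infty$ that you computed: writing $u_j=(\max_{|n|\le j}a_n^2)\theta_0^{j/p}$, condition (\ref{keyeq}) says $\sum_j u_j<\infty$, the subadditive route needs $\sum_j u_j^p<\infty$, and the first does not imply the second when $p<1$ (take $u_j=j^{-1/p}$, realizable by a comb with $a_n^2$ of order $|n|^{-1/p}\theta_0^{-|n|/p}$ and $b_n=1$). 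For $p<1/2$ nothing is at stake, since every comb domain has finite $p$-th moment by the Burkholder argument in the introduction. So the honest summary is: your proposal, like the paper's proof, establishes the theorem for $p\ge1$, and for $1/2\le p<1$ establishes only the variant with (\ref{keyeq}) replaced by $\sum_j(\max_{|n|\le j}a_n^2)^p\theta_0^{j}<\infty$; your instinct that the stated exponent $j/p$ requires an extra idea on $[1/2,1)$ is a legitimate criticism of the paper's own argument, not a defect peculiar to yours.
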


We note that this theorem can also be applied in many cases where $\ell=\infty$, since removing slits in the complement of the domain can only increase the moments of the exit time. Therefore, if a collection of slits can be removed from the complement of $\mathscr{M}_{x}$ such that $\ell$ becomes finite (if for instance $\ell$ was infinite due to the $a_n$'s being small rather than the $b_n$'s being large) but (\ref{keyeq}) persists then the conclusion of the theorem still holds. As an immediate corollary of the theorem, if $a_n$ is uniformly bounded, or even bounded by any polynomial in $n$, and $b_n$ is uniformly bounded as well, then $\sum_{j=1}^\infty (\max_{|n| \leq j} a_n^2) \theta^j < \infty$ for any  $\theta<1$, and therefore all moments of $\tau_{\mathscr{M}_{x}}$ are finite. We will see later that this theorem can even be extended a bit in order to handle certain sequences where the $a_n$ grow faster than this, for instance certain sequences with exponential growth.



We will prove Proposition \ref{existinf} and Theorem \ref{bigguy} in the next section, and add some concluding remarks in the final section.

\section{Proofs} \label{proofs}

{\it Proof of Proposition \ref{existinf}}

By the monotonicity of moments, it is enough to consider $p= \frac{1}{2}$. Our domain will have $b_n = 1$ for all $n$. Let us first consider a comb domain derived from a finite sequence, that is 

\[
\mathscr{M}_{x}:=\mathbb{C}\setminus\bigcup_{n\in\mathbb\{1, \ldots N\}}I_{n}
\]
where again $I_{n}:=\{x_{n}\}\times([1,+\infty)\cup(-\infty,-1])$.
In this case $\mathscr{M}_{x}$ contains the half plane $\{\Re(z) > x_N\}$. The exit time of a half plane has infinite $\frac{1}{2}$ moment, as discussed in the previous section. By Proposition \ref{monotone}, $E((\tau_{\mathscr{M}_{x}})^{1/2}] = \infty$.
We are now ready to construct an infinite unbounded sequence $(x_{n})_{n\in\mathbb{Z}}$ whose corresponding $\mathscr{M}_{x}$ has infinite $\frac{1}{2}$ moment; in fact, it will even be subdomain of $\{\Re(z)>0\}$ with a one-sided sequence of vertical slits removed, and naturally it can be extended arbitrarily to a two sided sequence if desired. For $c<d$ let $S_{c,d}$ denote the infinite vertical strip $\{b<\Re(z)<c\}$. We will start our Brownian motion at the point 1. Let $x_{1}>1$ be a real number such that $E_1(\tau_{S_{0,x_{1}}}^{1/2})>1$,
which does exist because $E_1(\tau_{S_{0,x}}^{1/2})\nearrow+\infty$ as
$x \nearrow +\infty$ by Proposition \ref{monotone}. Next, consider the domain $U_{2}=S_{0,x_{2}}\setminus I_{1}$
with $I_{1}:=\{x_{1}\}\times([1,+\infty)\cup(-\infty,-1])$, where
$x_{2}$ is chosen so that $E_1(\tau_{U_{2}}^{1/2})>2$, and again this is possible since $\lim_{x \nearrow \infty} E_1((\tau_{S_{0,x}\setminus I_{1}})^{1/2}) = \infty$. Continuing inductively in this way we
construct $U_{n+1}$ from $U_{n}$ by
\[
U_{n+1}=(U_{n}\cap S_{x_{n+1}})\setminus I_{n}
\]
where $I_{n}:=\{x_{n}\}\times([1,+\infty)\cup(-\infty,-1])$, $x_{n}<x_{n+1}$
and $E_1(\tau_{U_{n+1}}^{1/2})>n+1$. The domain 
\[
U:=\bigcup_{n=1}^{\infty}U_{n}=U_{\infty}
\]
 (with $U_{1}:=S_{0,x_{1}}$) is a comb domain that fits the requirement
since 
\[
n\leq E_1(\tau_{U_{n}}^{1/2})\leq E_1(\tau_{U}^{1/2})
\]
for all $n$. Consequently $E_(\tau_{U}^{1/2})=+\infty$, and thus all
moments $E(\tau_{U}^{p})$ are infinite for any $p\in[1/2,+\infty)$. 

\qed

{\it Proof of Theorem \ref{bigguy}}

Before tackling the proof we give some notations and definitions
that we will use. As before, for $c<d$ let $S_{c,d} = \{c<\Re(z)<d\}$.
It will be convenient to think of the sequence $(x_{n})_{n\in\mathbb{Z}}$ as a map from $\mathbb{Z}$ into $\RR$ defined by $x(n) = x_n$, and with inverse $x^{-1}$. Denote the image of this map by ${\cal X} = \cup_{n=-\infty}^\infty \{x_n\}$. We will assume that our Brownian motion starts at $x_0 = 0$. Consider the following sequence
of associated stopping times
\[
\widehat{\tau}_{j}:=\begin{cases}
0 & {\scriptstyle \left(j=0\right)}\\
\inf\{t>\widehat{\tau}_{j-1}\mid R_{t}\in{\cal X}\setminus\{R_{\widehat{\tau}_{j-1}}\}\} & {\scriptstyle \left(j>0\right)}
\end{cases}.
\]
with $R_{t}=\Re (Z_{t})$. More precisely, $\widehat{\tau}_{j}$ encodes
the time of the $j^{th}$ passage of $R_{t}$ at the lines carrying
the slits under the constraint that it is different from the $(j-1)^{st}$
one. Equivalently, $\widehat{\tau}_{j}$ is the first exit time of $Z_{t}$
from $S_{x(x^{-1}(R_{\widehat{\tau}_{j-1}})-1),x(x^{-1}(R_{\widehat{\tau}_{j-1}})+1)}$ after $\widehat{\tau}_{j-1}$.
Finally, let $\tau$ be the exit time from the comb domain and set $\tau_{j}=\tau\wedge\widehat{\tau}_{j}$.
$\tau$ can be expressed as 
\[
\tau=\sum_{j=0}^{\infty}(\tau_{j+1}-\tau_{j}),
\]
whence
\[
E(\tau^{p})^{1/p}\leq\sum_{j=0}^{\infty}E((\tau_{j+1}-\tau_{j})^{p})^{1/p}
\]
thanks to the H\"older-Minkowsky inequality. We need therefore only show that this sum is finite. Note that $\tau_j = \tau_{j+1}$ on the event $\{\tau \leq \tau_j\}$, while $\tau_j$ and $\tau_{j+1}$ are simply equal to $\widehat \tau_j$ and $\widehat \tau_{j+1}$ on $\{\tau_j < \tau\}$. It therefore follows that

\begin{equation} \label{yeungeun}
\begin{split}
E((\tau_{j+1}-\tau_{j})^{p}) & = E((\widehat \tau_{j+1}-\widehat \tau_{j})^{p} 1_{\{\tau_j < \tau\}}) \\
&= E((\widehat \tau_{j+1}-\widehat \tau_{j})^{p} 1_{\{\tau_j < \tau\}} \sum_{n=-j}^j 1_{\{R_{\widehat{\tau}_{j}} = x_n\}}) \\
&= \sum_{n=-j}^j E((\widehat \tau_{j+1}-\widehat \tau_{j})^{p} 1_{\{\tau_j < \tau\}}  1_{\{R_{\widehat{\tau}_{j}} = x_n\}}) \\
& = \sum_{n=-j}^j P(\tau_j < \tau, R_{\widehat{\tau}_{j}} = x_n) E((\widehat \tau_{j+1}-\widehat \tau_{j})^{p}|\tau_j < \tau, R_{\widehat{\tau}_{j}} = x_n) \\
& = \sum_{n=-j}^j P(\tau_j < \tau, R_{\widehat{\tau}_{j}} = x_n) E_{x_n}((\tau_{S_{x(x^{-1}(R_{\widehat{\tau}_{j-1}})-1),x(x^{-1}(R_{\widehat{\tau}_{j-1}})+1)}})^p) \\
& = \sum_{n=-j}^j P(\tau_j < \tau, R_{\widehat{\tau}_{j}} = x_n) E_{0}((\tau_{S_{-a_n,a_{n+1}}})^p) \\
& \leq P(\tau_j < \tau) \max_{|n| \leq j}E_{0}((\tau_{S_{-a_n,a_{n+1}}})^p).
\end{split}
\end{equation}

Note that we have used the strong Markov property in the second-to-last equality, and also that our sum needed only to be over the set $\{|n|\leq j\}$ rather than all of $\mathbb{Z}$ because $R_{\widehat{\tau}_{j}}$ cannot be equal to $x_n$ with $|n|>j$ since $R_{\widehat{\tau}_{0}}=x_0$. In order to estimate this quantity, we need the following lemmas.

\begin{lem} \label{pmoment_strip}
$E_{0}((\tau_{S_{-a_n,a_{n+1}}})^p) \leq \max (a_n, a_{n+1})^{2p}E_{0}((\tau_{S_{-1,1}})^p)$.
\end{lem}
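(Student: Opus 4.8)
The plan is to combine two elementary facts about Brownian exit times: pathwise domain monotonicity and the Brownian scaling relation. Throughout, write $m = \max(a_n,a_{n+1})$, and recall that the Brownian motion starts at $0$.

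First I would observe that the origin lies in the asymmetric strip $S_{-a_n,a_{n+1}}$, since $a_n,a_{n+1}>0$ gives $-a_n<0<a_{n+1}$, and moreover that $S_{-a_n,a_{n+1}}\subseteq S_{-m,m}$ because $-m\leq -a_n$ and $a_{n+1}\leq m$. A Brownian path must therefore leave the smaller strip no later than the larger one, which yields the pathwise inequality $\tau_{S_{-a_n,a_{n+1}}}\leq \tau_{S_{-m,m}}$ almost surely under $P_0$. This is the random-variable–level version of the first part of Proposition \ref{monotone}, and taking $p$-th moments gives $E_0(\tau_{S_{-a_n,a_{n+1}}}^p)\leq E_0(\tau_{S_{-m,m}}^p)$.

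Next I would exploit Brownian scaling to compare the symmetric strip $S_{-m,m}$ with $S_{-1,1}$. The dilation $z\mapsto z/m$ carries $S_{-m,m}$ onto $S_{-1,1}$, and if $Z_t$ is a Brownian motion started at $0$ then $W_t:=Z_{m^2 t}/m$ is again a Brownian motion started at $0$. The first time $W$ exits $S_{-1,1}$ is exactly $m^{-2}$ times the first time $Z$ exits $S_{-m,m}$, so that $\tau_{S_{-m,m}}\overset{d}{=}m^2\,\tau_{S_{-1,1}}$ under $P_0$. Taking $p$-th moments gives $E_0(\tau_{S_{-m,m}}^p)=m^{2p}E_0(\tau_{S_{-1,1}}^p)$, and combining with the previous step produces the asserted bound $E_0(\tau_{S_{-a_n,a_{n+1}}}^p)\leq \max(a_n,a_{n+1})^{2p}E_0(\tau_{S_{-1,1}}^p)$.

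There is no genuine obstacle here; the only points requiring care are verifying that $0$ belongs to both strips (guaranteed by the positivity of the gaps $a_n$) and applying the scaling identity with the correct exponent, namely $m^{2p}$ rather than $m^p$, since exit times scale quadratically in space. One could alternatively note that exit from a vertical strip depends only on $\Re(Z_t)$, reducing the problem to a one-dimensional Brownian motion leaving an interval, but the two-dimensional monotonicity-plus-scaling argument is cleaner and sidesteps that reduction entirely.
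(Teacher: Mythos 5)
Your proof is correct and follows essentially the same route as the paper's: domain monotonicity to pass to the symmetric strip $S_{-m,m}$, followed by Brownian scaling to reduce to $S_{-1,1}$. The only cosmetic difference is that the paper first reduces the strip exit to a one-dimensional Brownian motion exiting $(-d_n,d_n)$ before scaling, whereas you scale the planar motion directly; both are equally valid.
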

\begin{proof}
For the sake of brevity we denote by $d_n=\max (a_n,a_{n+1})$. A monotonicity argument yields $ E_{0}((\tau_{S_{-a_n,a_{n+1}}})^p) \leq E_{0}((\tau_{S_{-d_n,d_n}})^p) $.  The exit time from the strip $S_{-d_n,d_n}$ is simply the exit time of a one dimensional Brownian motion from the interval $(-d_n,d_n)$ . Hence by scaling, we get $E_{0}((\tau_{S_{-d_n,d_n}})^p) =d_n^{2p}E_{0}((\tau_{S_{-1,1}})^p)$  which completes the proof.
\end{proof}

For the next lemma, let $K^b_{c,d}$ denote the rectangle $S_{c,d} \cap \{-b<\Im(z)<b\}$, and let $I_t = \Im(Z_t)$. Recall also that $\ell=\sup_n \Big(\frac{\max(b_{n-1},b_{n+1})}{\min(a_n,a_{n+1})}\Big) < \infty$.

\begin{lem} \label{probexitstrip}
We have $P(\tau_{j}<\tau)\leq\theta_0^{j}$, where $\theta_0:= 1-\frac{1}{2} P_{0}( |I_{\tau_{K^\ell_{-1,1}}}|=\ell)$.
\end{lem}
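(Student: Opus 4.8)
The plan is to show that at each step $\widehat{\tau}_j$, regardless of where the Brownian motion currently sits on one of the lines $\{x_n\}$, there is a probability bounded below by a fixed constant that the motion exits the comb domain before reaching the next line. This would give a geometric decay of $P(\tau_j < \tau)$ in $j$, yielding the stated bound. The key intuition is that $\theta_0$ is designed to be $1$ minus half the probability that a Brownian motion started at $0$ exits the rescaled rectangle $K^\ell_{-1,1}$ through its top or bottom edge (i.e. $|I_{\tau_{K}}| = \ell$) rather than through its left or right edge; exiting through the top or bottom corresponds to the motion hitting a slit, and hence exiting the comb domain.

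First I would set up an induction on $j$, reducing the claim to a one-step estimate: I claim that for each $j \geq 1$,
\[
P(\tau_j < \tau \mid \mathcal{F}_{\widehat{\tau}_{j-1}}, \, \tau_{j-1} < \tau) \leq \theta_0
\]
uniformly, where $\mathcal{F}$ is the Brownian filtration. By the strong Markov property applied at $\widehat{\tau}_{j-1}$, it suffices to bound the probability that, starting from a point $x_n$ on one of the vertical lines, the motion reaches the adjacent line $x_{n-1}$ or $x_{n+1}$ (defining $\widehat{\tau}_j$) before exiting the comb domain through a slit. So the whole lemma comes down to a single geometric estimate that is uniform over which line $x_n$ the motion starts on.

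The heart of the argument is to produce that uniform lower bound on the exit-through-slit probability. Starting at $x_n$, consider the strip $S_{x_{n-1}, x_{n+1}}$ between the two neighbouring lines; reaching either boundary line is the event that defines $\widehat{\tau}_j$. The relevant slits on the lines $x_{n-1}$ and $x_{n+1}$ begin at heights $\pm b_{n-1}$ and $\pm b_{n+1}$. I would inscribe inside this strip a rectangle centered horizontally at $x_n$ whose half-height is $\max(b_{n-1}, b_{n+1})$ and whose half-widths reach to the neighbouring lines, i.e. a rectangle comparable to $K^{\max(b_{n-1},b_{n+1})}_{-a_n, a_{n+1}}$; if the motion exits this rectangle through its top or bottom edge, then its imaginary part has absolute value at least $\max(b_{n-1},b_{n+1}) \geq b_{n\pm 1}$ while its real part still lies strictly between the two lines, so it must have crossed into a slit and thereby exited $\mathscr{M}_x$ before reaching either neighbouring line. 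By the scaling property of Brownian motion and the definition of $\ell$ (which ensures the aspect ratio of this rectangle is controlled, so it contains a rescaled copy of $K^\ell_{-1,1}$ up to the monotonicity of exit probabilities), the probability of exiting through top or bottom is bounded below by $P_0(|I_{\tau_{K^\ell_{-1,1}}}| = \ell)$. Hence the probability of \emph{not} exiting through a slit — i.e. of surviving to reach a neighbouring line — is at most $1 - P_0(|I_{\tau_{K^\ell_{-1,1}}}| = \ell)$, which is even smaller than $\theta_0$; the extra factor of $\tfrac{1}{2}$ in $\theta_0$ gives slack, presumably to absorb the fact that the rectangle is not perfectly centered (the start point $x_n$ need not be the midpoint of $[x_{n-1}, x_{n+1}]$) and to let a single symmetric rectangle $K^\ell_{-1,1}$ serve as the comparison object after rescaling by $\min(a_n, a_{n+1})$.

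The main obstacle I anticipate is making the comparison to the \emph{fixed} rectangle $K^\ell_{-1,1}$ genuinely uniform in $n$. The rectangle I naturally inscribe depends on $a_n, a_{n+1}, b_{n-1}, b_{n+1}$ and is asymmetric in its horizontal extent, whereas the target quantity is the symmetric $K^\ell_{-1,1}$. The role of $\ell < \infty$ is exactly to bound the height-to-width ratio $\max(b_{n-1},b_{n+1})/\min(a_n,a_{n+1})$ by $\ell$, so that after rescaling horizontal distances by $\min(a_n,a_{n+1})$ the inscribed rectangle contains $K^\ell_{-1,1}$, and then a monotonicity argument (a taller or narrower rectangle only makes top/bottom exit more likely) together with scale-invariance of the exit distribution transfers the lower bound to the fixed constant $P_0(|I_{\tau_{K^\ell_{-1,1}}}|=\ell)$. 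Handling the off-center starting point cleanly — ensuring the symmetric comparison rectangle fits regardless of whether $a_n$ or $a_{n+1}$ is the smaller gap — is where the factor $\tfrac{1}{2}$ earns its keep, and is the step I would write most carefully.
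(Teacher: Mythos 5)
Your high-level architecture (induction to a uniform one-step estimate, an inscribed rectangle in the strip $S_{x_{n-1},x_{n+1}}$, rescaling via $\ell$ to the fixed rectangle $K^{\ell}_{-1,1}$) matches the paper's, but the central geometric step of your argument is false. You claim that if the motion exits the rectangle of half-height $\beta_n=\max(b_{n-1},b_{n+1})$ spanning the strip through its top or bottom edge, then it ``must have crossed into a slit and thereby exited $\mathscr{M}_x$.'' It has not: the slits are \emph{vertical} rays sitting on the lines $\Re(z)=x_{n\pm1}$, so a point of the top edge whose real part lies strictly between $x_{n-1}$ and $x_{n+1}$ is an \emph{interior} point of the comb domain (except possibly the single point $x_n+i\beta_n$, and only when $\beta_n\geq b_n$). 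Reaching slit \emph{height} while between the lines is not the same as being \emph{in} a slit; the motion can still come back down and reach a neighbouring line through the open window $(-b_{n\pm1},b_{n\pm1})$. This is precisely the gap that the factor $\frac{1}{2}$ in $\theta_0$ closes in the paper's proof: after the motion first hits the top (or bottom) edge at height $\pm\beta_n$, the strong Markov property and reflection symmetry about that horizontal line give probability exactly $\frac{1}{2}$ that the eventual exit from the strip occurs at height $\geq\beta_n$ (resp.\ $\leq-\beta_n$), and \emph{that} exit point, having real part $x_{n\pm1}$ and $|\Im(z)|\geq\beta_n\geq b_{n\pm1}$, does lie in a slit. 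Your reading of the $\frac{1}{2}$ as mere slack for the horizontal asymmetry is therefore incorrect (the asymmetry $a_n\neq a_{n+1}$ is handled by a separate monotonicity step: the top/bottom-exit event for the narrow symmetric rectangle is contained in that for the wide one), and without the symmetry step your one-step bound does not follow.

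There is a second gap: at time $\widehat{\tau}_{j-1}$ the motion sits at $x_n+yi$ for an arbitrary $y\in(-b_n,b_n)$, not on the real axis, while your estimate starts the motion at the center of a rectangle that is vertically symmetric about $\Im(z)=0$; you never treat $y\neq 0$ (note $b_n$ can exceed $\beta_n$, so your starting point need not even lie inside your rectangle). The paper resolves this with a reflection coupling (its Claim 1): reflecting across the line $\Im(z)=y/2$ couples the path started at $x_n$ with the path started at $x_n+yi$, and since the coupled path has $|\Im(Z_t)|$ pointwise at least that of the path from $x_n$ until coalescence, the probability of exiting the strip at height $\geq\beta_n$ is minimized at $y=0$. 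Some such argument is needed before your one-step estimate is uniform over the actual conditional starting positions, and hence before the induction can close.
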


\begin{proof}
The proof is by induction. Assume that the statement holds for $j-1$, so that
$
P(\tau_{j}<\tau) = P(\tau_{j-1}<\tau) P(\tau_{j}<\tau|\tau_{j-1}<\tau) \leq \theta_0^{j-1}P(\tau_{j}<\tau|\tau_{j-1}<\tau).
$

Now, if $\tau_{j-1}<\tau$ then $Z_{\tau_{j-1}} \in \mathscr{M}_{x}$ (rather than in its complement). We need to show that, under this assumption, the probability that $Z_{\tau_{j}} \in \mathscr{M}_{x}$ is bounded above by $\theta_0$. This will follow from the strong Markov property if we can show that

\begin{equation} \label{xz}
    1-\sup_{x_n \in {\cal X}, y \in (-b_n,b_n)} P_{x_n + yi} (|I_{\tau_{S_{x_{n-1},x_{n+1}}}}| < \beta_n) = \inf_{x_n \in {\cal X}, y \in (-b_n,b_n)} P_{x_n + yi} (|I_{\tau_{S_{x_{n-1},x_{n+1}}}}| \geq \beta_n) \geq \frac{1}{2} P_{0}( |I_{\tau_{K^\ell_{-1,1}}}| = \ell ),
\end{equation}

where $\beta_n = \max(b_{n-1},b_{n+1})$; note that we are using the fact that on the event $\{R_{\tau_{j-1}}=x_n\}$ the event $\{|I_{\tau_{j}}| \geq \beta_n\}$ is contained in the event $\{\tau_j=\tau\}$. The proof of this depends on two claims.

{\bf Claim 1:} For fixed $x_n \in {\cal X}$, 

$$
\inf_{y \in (-b_n,b_n)} P_{x_n + yi} (|I_{\tau_{S_{x_{n-1},x_{n+1}}}}| \geq \beta_n) = P_{x_n} (|I_{\tau_{S_{x_{n-1},x_{n+1}}}}| \geq \beta_n).
$$

That is, the probability is minimized when $y=0$. To prove this, we employ a coupling argument. Fiz $y>0$, and let $Z_0=x_n$ a.s. Let $\si(z) = \bar z + yi$; note that $\si(z)$ is the reflection over the horizontal line $\D=\{\Im(z) = \frac{y}{2}\}$. Let $H_\D$ be first time that $Z_t$ hits $\D$, and form the process $\widetilde Z_t$ by the rule

$$ \widetilde Z_t = \left \{ \begin{array}{ll}
\si(Z_t) & \qquad  \mbox{if } t < H_\D  \\
Z_t & \qquad \mbox{if } t \geq H_\D\;.
\end{array} \right. $$

\begin{figure}[H]
\centering{}\includegraphics[width=8cm,height=8cm,keepaspectratio]{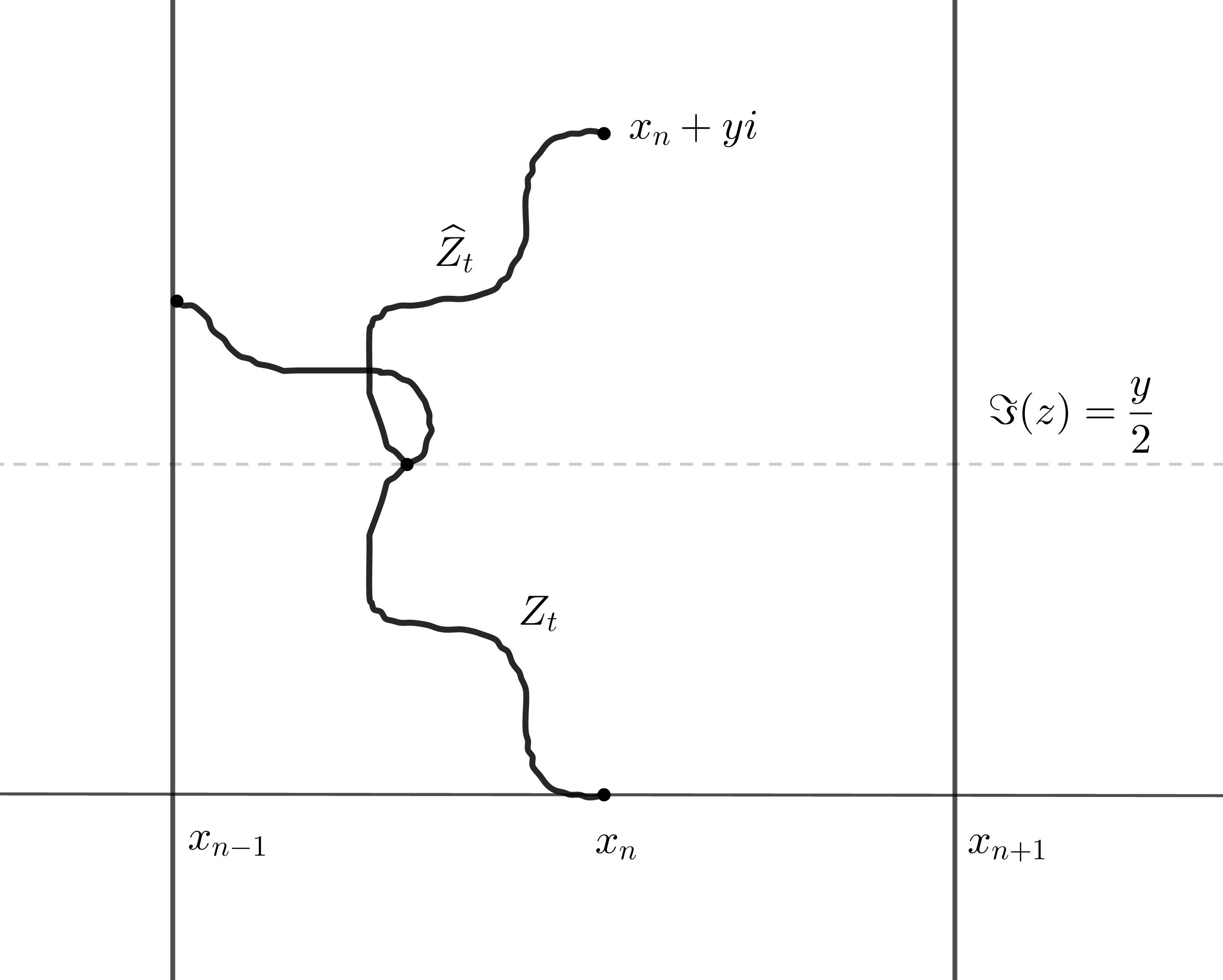}\caption{$Z$ and $\widetilde Z$ coalesce upon hitting $\{\Im(z)=\frac{y}{2}\}$}
\end{figure}

It follows from the Strong Markov property and the reflection invariance of Brownian motion that $\widetilde Z_t$ is a Brownian motion. Let $\widetilde \tau_{S_{x_{n-1},x_{n+1}}}$ denote the first time that $\widetilde Z_t$ exits $S_{x_{n-1},x_{n+1}}$. By the translation invariance of $S_{x_{n-1},x_{n+1}}$ we have $\tau_{S_{x_{n-1},x_{n+1}}} = \widetilde \tau_{S_{x_{n-1},x_{n+1}}}$. Furthermore, $Z_{t} = \widetilde Z_{t}$ on the set $\{t \geq H_\D\}$, while on the set $\{t < H_\D\}$ we see that $|\Im(Z_t)| < |\Im(\widetilde Z_t)|$. This implies that $\{|\Im(Z_{\tau_{S_{x_{n-1},x_{n+1}}}})| \geq \beta_n\} \subseteq \{|\Im(\widetilde Z_{\widetilde \tau_{S_{x_{n-1},x_{n+1}}}})| \geq \beta_n\}$, and the claim follows. Naturally, the case $y<0$ can be handled by a symmetric argument.

{\bf Claim 2:} For fixed $x_n \in {\cal X}$, 

$$
P_{x_n} (|I_{\tau_{S_{x_{n-1},x_{n+1}}}}| \geq \beta_n) \geq \frac{1}{2} P_{x_n}( |I_{\tau_{K^{\beta_n}_{x_{n-1},x_{n+1}}}}| = \beta_n ).
$$

To prove this, note that  

$$
P_{x_n} (|I_{\tau_{S_{x_{n-1},x_{n+1}}}}| \geq 1| I_{\tau_{K^{\beta_n}_{x_{n-1},x_{n+1}}}} = \beta_n) \geq P_{x_n} (I_{\tau_{S_{x_{n-1},x_{n+1}}}} \geq \beta_n| I_{\tau_{K^{\beta_n}_{x_{n-1},x_{n+1}}}} = \beta_n).
$$ 

This latter probability is precisely $\frac{1}{2}$ by the strong Markov property and symmetry. The symmetric argument shows that $P_{x_n} (|I_{\tau_{S_{x_{n-1},x_{n+1}}}}| \geq \beta_n| I_{\tau_{K^{\beta_n}_{x_{n-1},x_{n+1}}}} = -\beta_n) \geq \frac{1}{2}$ as well, and combining these yields the claim.

\begin{figure}[H]
\centering{}\includegraphics[width=9cm,height=9cm,keepaspectratio]{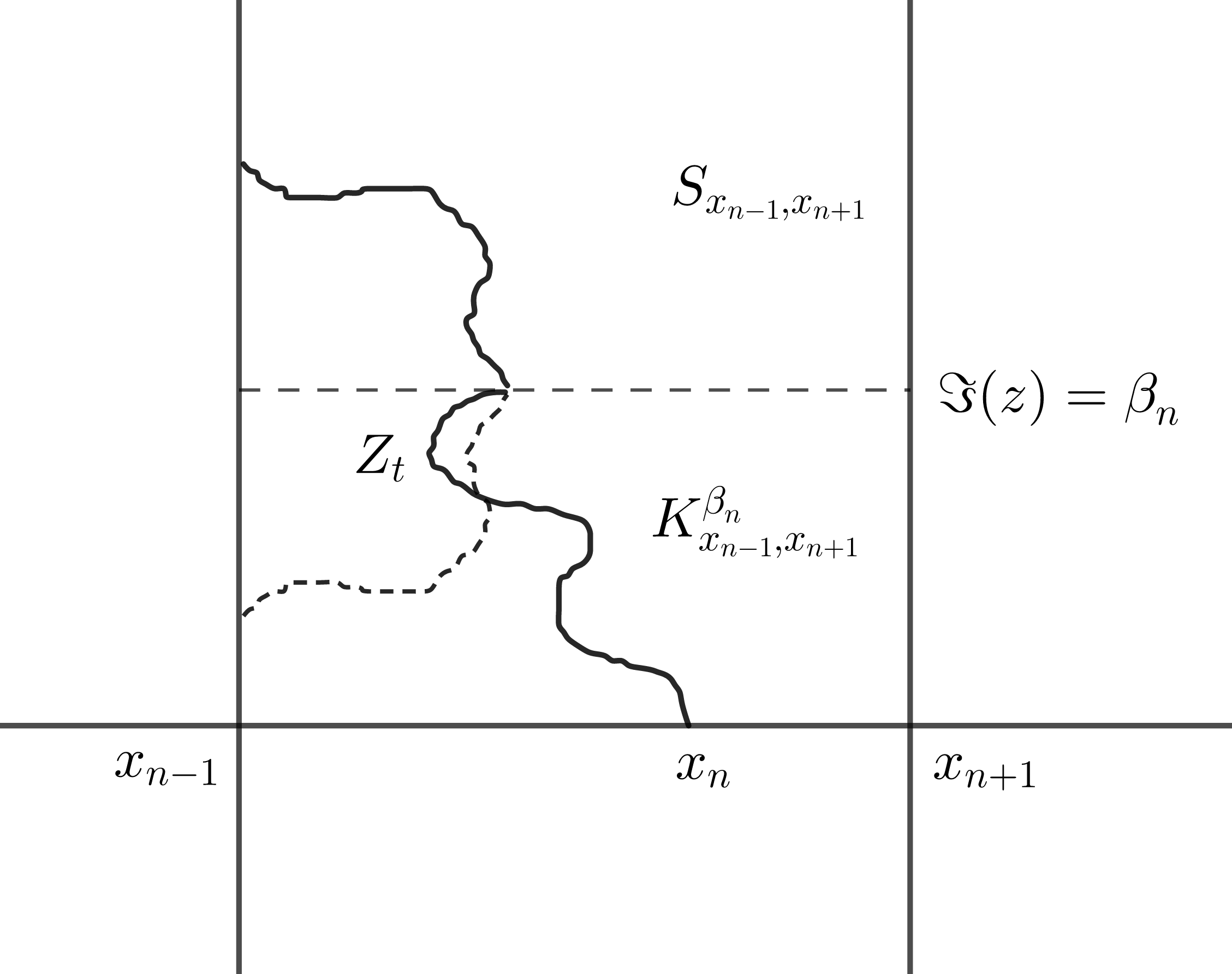}\caption{After hitting the top of $K^{\beta_n}_{x_{n-1},x_{n+1}}$, the Brownian motion is equally likely to exit $S_{x_{n-1},x_{n+1}}$ above $\{\Im(z)=1\}$ as below}
\end{figure}


Having established these claims, we can prove (\ref{xz}). 

\begin{equation}
\begin{split}
    \inf_{x_n \in {\cal X}, y \in (-b_n,b_n)} P_{x_n + yi} (|I_{\tau_{S_{x_{n-1},x_{n+1}}}}| \geq \beta_n) & \geq \inf_{x_n \in {\cal X}} \frac{1}{2} P_{x_n}( |I_{\tau_{K^{\beta_n}_{x_{n-1},x_{n+1}}}}| = \beta_n ) \\
     & = \inf_{n \in {\mathbb Z}} \frac{1}{2} P_{0}( |I_{\tau_{K^{\beta_n}_{-a_n,a_{n+1}}}}| = \beta_n ) \\
     & \geq \inf_{n \in {\mathbb Z}} \frac{1}{2} P_{0}( |I_{\tau_{K^{\beta_n}_{-\min(a_n,a_{n+1}),\min(a_n,a_{n+1})}}}| = \beta_n ) \\
    & \geq \frac{1}{2} P_{0}( |I_{\tau_{K^\ell_{-1,1}}}| = \ell ),
\end{split}
\end{equation}

since for a Brownian motion starting at $0$ the event $\{ |I_{\tau_{K^{\beta_n}_{-a_n,a_{n+1}}}}| = \beta_n \}$ is contained in the event $\{ |I_{\tau_{K^{\beta_n}_{-\min(a_n,a_{n+1}),\min(a_n,a_{n+1})}}}| = \beta_n \}$, and also $\frac{\beta_n}{\min(a_n,a_{n+1})} \leq \ell$.

\end{proof}

{\bf Remark:} The coupling argument used to prove Claim 1 is based on a method used in \cite[Thm. 1]{maxpmoment} in order to find the points which maximize the moments of the exit time from domains.

\vspace{12pt}

We may now complete the proof of Theorem \ref{bigguy}. By (\ref{yeungeun}) and the lines preceding it we have 

\[
E(\tau^{p})^{1/p}\leq \sum_{j=0}^{\infty} P(\tau_j < \tau)^{1/p} \max_{|n| \leq j}E_{0}(\tau_{S_{-a_n,a_{n+1}}}^p)^{1/p},
\]

Bounding these quantities by Lemmas \ref{pmoment_strip} and \ref{probexitstrip} yields

\[
E(\tau^{p})^{1/p}\leq \sum_{j=0}^{\infty} (\theta_0^{1/p})^j \max_{|n| \leq j+1}a_n^2.
\]


\section{Concluding remarks}

We have not attempted to optimize the conditions required in Theorem \ref{bigguy}, since it already covered quite general cases, including all domains with $b_n$ uniformly bounded and $a_n$ growing with at most a polynomial rate. Nevertheless, improvements using the same method are possible to suit particular situations if required. The following is an example; we will let $b_n=1$ for all $n$ in order to simplify the argument.

\begin{prop}
Suppose $(x_{n})_{n\in\mathbb{Z}}$ is an increasing sequence (with $x_0 = 0$) such that $\ell = \min_n (x_n - x_{n-1}) \geq 1$, $b_n=1$ for all $n$, and

\begin{equation} \label{keyeq2}
    \sum_{j=1}^\infty (\max_{|n| \leq j} a_n^2) (3/4)^{j/p} < \infty
\end{equation}

Then $E(\tau_{\mathscr{M}_{x}}^p)<\infty$.
\end{prop}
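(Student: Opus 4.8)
The plan is to deduce this directly from Theorem \ref{bigguy}: under the present hypotheses the constant $\theta_0$ produced by that theorem is at most $3/4$, and once this is established the assumption (\ref{keyeq2}) is literally the assumption (\ref{keyeq}) for that $\theta_0$, so the conclusion is immediate.

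First I would recast the hypotheses in the notation of Theorem \ref{bigguy}. Since $b_n = 1$ for every $n$ we have $\max(b_{n-1},b_{n+1}) = 1$, and since $a_n = x_n - x_{n-1} \geq \min_k a_k = \ell \geq 1$ we have $\min(a_n,a_{n+1}) \geq 1$. Hence the quantity $\sup_n \frac{\max(b_{n-1},b_{n+1})}{\min(a_n,a_{n+1})}$ that plays the role of ``$\ell$'' in Theorem \ref{bigguy} --- call it $\ell'$ to avoid clashing with the $\ell$ of the present statement --- satisfies $\ell' \leq 1 < \infty$. The theorem therefore applies and, by Lemma \ref{probexitstrip}, furnishes the explicit value $\theta_0 = 1 - \frac{1}{2}P_0\big(|I_{\tau_{K^{\ell'}_{-1,1}}}| = \ell'\big)$.

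The one substantive point is to bound $P_0\big(|I_{\tau_{K^{\ell'}_{-1,1}}}| = \ell'\big)$ below by $\frac12$. Writing $R_t = \Re(Z_t)$ and $I_t = \Im(Z_t)$ for the two independent coordinate motions, the event that $Z$ exits the rectangle $(-1,1)\times(-h,h)$ through a horizontal edge, namely $\{|I_{\tau_{K^h_{-1,1}}}| = h\}$, coincides (up to a null set) with $\{T^I_h < T^R\}$, where $T^I_h = \inf\{t : |I_t| = h\}$ and $T^R = \inf\{t : |R_t| = 1\}$. Since $T^I_h$ is non-decreasing in $h$ pathwise, the probability $P(T^I_h < T^R) = P_0\big(|I_{\tau_{K^h_{-1,1}}}| = h\big)$ is non-increasing in $h$. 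At $h = 1$ the domain is the square $(-1,1)^2$, and the reflection $z \mapsto i\bar z$ across the diagonal interchanges its horizontal and vertical edges while fixing the start $0$; by this symmetry each set of edges is hit with probability exactly $\frac12$, so $P_0\big(|I_{\tau_{K^1_{-1,1}}}| = 1\big) = \frac12$. As $\ell' \leq 1$, monotonicity gives $P_0\big(|I_{\tau_{K^{\ell'}_{-1,1}}}| = \ell'\big) \geq \frac12$, and therefore $\theta_0 \leq 1 - \frac12\cdot\frac12 = \frac34$.

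Finally I would close the argument. Because $0 \leq \theta_0 \leq \frac34$ we have $\theta_0^{j/p} \leq (3/4)^{j/p}$ for every $j \geq 1$, so (\ref{keyeq2}) yields $\sum_{j=1}^\infty \big(\max_{|n|\leq j} a_n^2\big)\theta_0^{j/p} \leq \sum_{j=1}^\infty \big(\max_{|n|\leq j} a_n^2\big)(3/4)^{j/p} < \infty$, which is precisely condition (\ref{keyeq}). Theorem \ref{bigguy} then gives $E(\tau_{\mathscr{M}_x}^p) < \infty$. There is no serious obstacle here: the only genuine content beyond invoking the theorem is the pathwise monotonicity of the horizontal-exit probability in the height of the rectangle together with the symmetry evaluation for the square, which pins $\theta_0$ down to the concrete value $3/4$; this is exactly the sort of situation-specific sharpening of the constant alluded to before the statement.
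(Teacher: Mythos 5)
Your proposal is correct and takes essentially the same approach as the paper: both arguments reduce the proposition to showing that the constant $\theta_0$ of Lemma \ref{probexitstrip} is at most $\frac{3}{4}$, using that the relevant rectangle sits inside the square $K^1_{-1,1}$ together with the symmetry of the square, which forces exit through the horizontal sides with probability exactly $\frac{1}{2}$. Your pathwise monotonicity of the horizontal-exit probability in the rectangle's height is just a rephrasing of the paper's containment argument, so there is no substantive difference.
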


{\bf Proof:} This follows from the same method as was used to prove Theorem \ref{bigguy}, except that in this case since $\ell \leq 1$ we can put a simple upper bound on $\theta_0:= 1-\frac{1}{2} P_{0}( |I_{\tau_{K^\ell_{-1,1}}}| = \ell)$. Here $K^\ell_{-1,1}$ is contained in the square $K^1_{-1,1}$, and therefore $P_{0}( |I_{\tau_{K^\ell_{-1,1}}}| = \ell ) \geq P_{0}( |I_{\tau_{K^1_{-1,1}}}| = 1 )= \frac{1}{2}$, by symmetry. Thus, $\theta_0 \leq \frac{3}{4}$. The result follows from this. \qed

Thus, for instance, this proposition allows us to conclude that $E(\tau_{\mathscr{M}_{x}}^p)<\infty$ if $a_n = r^{|n|}$ with $r>1$, provided that $r< (\frac{4}{3})^{1/(2p)}$. No doubt this argument can be refined, if required.

There are a number of variants on the problem we have addressed, many of which can be handled by suitable adaptions of the method we have employed. We will describe one, again simplifying by setting $b_n =1$ for all $n$. Suppose we form a comb domain out of an increasing one-sided sequence $(x_{n})_{n=0}^\infty$ of real numbers without accumulation point in $\RR$ and with $x_0=0$. That is, we let $\mathscr{M}^+_{x}$ be the domain 
\[
\mathscr{M}^+_{x}:=\{\Re(z)>0\} \setminus\bigcup_{n=1}^\infty I_{n}
\]
where $I_{n}:=\{x_{n}\}\times(\{1,+\infty\}\cup\{-\infty,-1\})$. It may seem that we could weaken our conditions in order to conclude that $p$-th moments are finite, since this domain is essentially smaller than one would be corresponding to a two-sided sequence. However, the following proposition shows that this is not the case.

\begin{prop}
\begin{itemize}
    \item[a)] Suppose $\mathscr{M}_{x}$ is a comb domain corresponding to a two-sided sequence $(x_{n})_{n\in \mathbb{Z}}$. Then $E(\tau_{\mathscr{M}_{x}}^p)<\infty$ if, and only if, $E(\tau_{\mathscr{M}^+_{x}}^p)<\infty$ and $E(\tau_{\mathscr{M}^-_{x}}^p)<\infty$, where $\mathscr{M}^+_{x} = \mathscr{M}_{x} \cap \{\Re(z) > x_0\}$ and $\mathscr{M}^-_{x} = \mathscr{M}_{x} \cap \{\Re(z) < x_1\}$.
    
    \item[b)] Suppose $\mathscr{M}^+_{x}$ is a comb domain corresponding to a one-sided sequence $(x_{n})_{n=0}^\infty$, with $x_0=0$. Extend the sequence to a two-sided one by the rule $x_{-n} = -x_n$, and let $\mathscr{M}_{x}$ be the comb domain corresponding to this two-sided sequence (note that $\mathscr{M}^+_{x} = \mathscr{M}_{x} \cap \{\Re(z) > x_0\}$). Then $E(\tau_{\mathscr{M}^+_{x}}^p)<\infty$ if, and only if, $E(\tau_{\mathscr{M}_{x}}^p)<\infty$.
\end{itemize}
\end{prop}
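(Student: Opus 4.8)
The plan is to prove both parts using only the two monotonicity principles already established (Proposition \ref{prop:-For-,} and Proposition \ref{monotone}) together with the symmetry of Brownian motion across the real axis, without invoking the machinery of Theorem \ref{bigguy}. The key structural observation is that a two-sided comb domain $\mathscr{M}_{x}$ is, up to a set of measure zero, the union of the right half-piece $\mathscr{M}^+_{x}$ and the left half-piece $\mathscr{M}^-_{x}$, which overlap in the open strip $S_{x_0,x_1}$ lying strictly between the two central slits; meanwhile each of $\mathscr{M}^+_{x}$ and $\mathscr{M}^-_{x}$ is contained in $\mathscr{M}_{x}$.

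For part (a), the ``only if'' direction is immediate from the first item of Proposition \ref{monotone}: since $\mathscr{M}^+_{x} \subset \mathscr{M}_{x}$ and $\mathscr{M}^-_{x} \subset \mathscr{M}_{x}$, finiteness of $E(\tau_{\mathscr{M}_{x}}^p)$ forces finiteness of both $E(\tau_{\mathscr{M}^+_{x}}^p)$ and $E(\tau_{\mathscr{M}^-_{x}}^p)$. For the ``if'' direction I would start the Brownian motion at a point in the central strip $S_{x_0,x_1}$ and decompose the exit time from $\mathscr{M}_{x}$ according to excursions to the right versus to the left of this strip. The clean way to package this is to run the motion until it first exits $\mathscr{M}_{x}$, and to bound $\tau_{\mathscr{M}_{x}}$ by the sum of the time spent in successive visits to $\mathscr{M}^+_{x}$ and $\mathscr{M}^-_{x}$. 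Because each such sojourn, by the strong Markov property, is dominated in distribution by an independent copy of the exit time from $\mathscr{M}^+_{x}$ or $\mathscr{M}^-_{x}$ started from the relevant vertical line, and because the number of such alternations is governed by a geometric-type escape probability (the motion exits the central strip and must return to it to switch sides), the total exit time is controlled by a sum of the shape already handled in the proof of Theorem \ref{bigguy}. Concretely, I would mimic the Hölder–Minkowski decomposition $E(\tau^p)^{1/p} \leq \sum_j E((\tau_{j+1}-\tau_j)^p)^{1/p}$ used there, where now $\tau_j$ marks successive returns to the central strip; each increment has $p$-th moment bounded by $\max(E(\tau_{\mathscr{M}^+_{x}}^p), E(\tau_{\mathscr{M}^-_{x}}^p))$ times a geometrically decaying return probability, giving a convergent sum.

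For part (b), the symmetrization $x_{-n}=-x_n$ makes $\mathscr{M}_{x}$ invariant under the reflection $z \mapsto -\bar z$ across the imaginary axis, so $\mathscr{M}^-_{x}$ is the mirror image of $\mathscr{M}^+_{x}$ and hence $E(\tau_{\mathscr{M}^-_{x}}^p) = E(\tau_{\mathscr{M}^+_{x}}^p)$ by the reflection invariance of Brownian motion. The ``only if'' direction is again trivial from $\mathscr{M}^+_{x} \subset \mathscr{M}_{x}$ and Proposition \ref{monotone}. For the ``if'' direction, finiteness of $E(\tau_{\mathscr{M}^+_{x}}^p)$ gives finiteness of $E(\tau_{\mathscr{M}^-_{x}}^p)$ by the symmetry just noted, and then part (a) applies verbatim to conclude $E(\tau_{\mathscr{M}_{x}}^p)<\infty$; thus (b) is essentially a corollary of (a) once the reflection symmetry is invoked.

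The main obstacle I anticipate is making the excursion decomposition in the ``if'' direction of part (a) fully rigorous: one must verify that the time accumulated during a single sojourn on the right side is genuinely dominated by an exit time from $\mathscr{M}^+_{x}$ and not inadvertently by a longer time that could leak across the central strip, and that the return probabilities decay geometrically so that the resulting series converges. The subtle point is that starting from the central strip the motion may cross it many times, and one needs a uniform bound, independent of the starting height, on the probability that the motion re-enters the central strip after an excursion to one side; this is exactly the kind of escape estimate supplied by a coupling or reflection argument analogous to Claim 1 and Claim 2 in the proof of Lemma \ref{probexitstrip}, and I would adapt that argument here rather than prove a new estimate from scratch.
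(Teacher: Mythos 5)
Your treatment of part (a) is sound in outline, and it takes a more self-contained route than the paper: where you run the excursion decomposition by hand (alternating sojourns in $\mathscr{M}^+_x$ and $\mathscr{M}^-_x$, Minkowski's inequality, geometric decay of the continuation probability), the paper simply invokes Theorem \ref{flydag} (quoted from \citep{markowsky}) with $V=\mathscr{M}^+_x$ and $W=\mathscr{M}^-_x$ and verifies its hypotheses (i)--(iii). Those hypotheses are exactly the two uniform estimates you flag as the main obstacles: a bound on the $p$-th moment of a sojourn that is uniform over the starting segment $\{x_0\}\times(-b_0,b_0)$ (and symmetrically over $\{x_1\}\times(-b_1,b_1)$), and a continuation probability bounded away from $1$, both obtained---precisely as you propose---by adapting the Claim 1 coupling to show the suprema are attained at points of imaginary part $0$. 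So for (a) your proposal amounts to reproving the paper's black-box gluing theorem in this special geometry; the citation buys the paper brevity, your route buys self-containedness. One caution: the quantity you need is $\sup_a E_a(\tau^p_{\mathscr{M}^{\pm}_x})$ over those boundary segments, not the moment from a single fixed interior point, so the Claim 1 adaptation is not optional but is the heart of the matter.

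Part (b), however, contains a genuine error. With the symmetric extension $x_{-n}=-x_n$, the mirror image of $\mathscr{M}^+_x=\mathscr{M}_x\cap\{\Re(z)>x_0\}$ under $z\mapsto-\bar z$ is $\mathscr{M}_x\cap\{\Re(z)<x_0\}$, \emph{not} $\mathscr{M}^-_x=\mathscr{M}_x\cap\{\Re(z)<x_1\}$: the cuts in part (a) are deliberately asymmetric (at $x_0$ on one side, $x_1$ on the other) so that the two pieces overlap in the strip $S_{x_0,x_1}$ and their union is all of $\mathscr{M}_x$. Hence $\mathscr{M}^-_x$ strictly contains the mirror image of $\mathscr{M}^+_x$, and the containment points the wrong way: reflection invariance gives finiteness of $E(\tau^p)$ for the \emph{smaller} domain $\mathscr{M}_x\cap\{\Re(z)<x_0\}$, and Proposition \ref{monotone} cannot upgrade this to the larger domain $\mathscr{M}^-_x$, so your claimed equality $E(\tau^p_{\mathscr{M}^-_x})=E(\tau^p_{\mathscr{M}^+_x})$ is false as written and symmetry alone does not reduce (b) to (a). The gap is fixable with your own tools: write $\mathscr{M}^-_x$ as the union of the mirror image $\mathscr{M}_x\cap\{\Re(z)<x_0\}$ and the piece $\mathscr{M}_x\cap\{x_{-1}<\Re(z)<x_1\}$, which lies in a vertical strip of finite width and therefore has all moments finite; these two pieces overlap in the open strip $S_{x_{-1},x_0}$, so a second application of the gluing argument of part (a) yields $E(\tau^p_{\mathscr{M}^-_x})<\infty$, and then (a) finishes the proof. (To be fair, the paper also treats this step lightly, asserting only that ``(a) implies (b)'' is clear; but the specific justification you give is incorrect, and the missing gluing step is exactly where the content lies.)
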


{\bf Proof:} (sketch) It is clear that $(a)$ implies $(b)$, and the forward implication of $(a)$ is trivial since $\mathscr{M}^+_{x},\mathscr{M}^-_{x} \subseteq \mathscr{M}_{x}$. To prove the reverse implication, we apply the following result, which is Theorem 3 in \citep{markowsky}. 


\begin{thm} \label{flydag}
Suppose that $V$ and $W$ are domains with nonempty intersection, neither of which is contained in the other.  Suppose further that $E(T_V^p)< \infty$ and $E(T_W^p)< \infty$. Let $\tilde \dd V^+ = \dd V \cap W$ and $\tilde \dd W^+ = \dd W \cap V$, where $\dd V$ and $\dd W$ denote the boundaries of $V$ and $W$, and assume that the following conditions are satisfied:

\begin{itemize} \label{}


\item[(i)] $\sup_{a \in \tilde \dd V^+} E_a(T^p_W) < \infty$;

\item[(ii)] $\sup_{a \in \tilde \dd W^+} E_a(T^p_V) < \infty$;

\item[(iii)] $\sup_{a \in \tilde \dd V^+} P_a(B_{T_W} \in \tilde \dd W^+) < 1$.

\end{itemize}

Then $E(T_{V \cup W}^p)< \infty$.
\end{thm}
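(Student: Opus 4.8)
The plan is to decompose $T_{V\cup W}$ according to the successive crossings the Brownian motion makes between $V$ and $W$, and to show that the number of crossings has a geometric tail while each crossing contributes a uniformly bounded $p$-th moment. The starting observation is topological: since $\dd(V\cup W)\subseteq \dd V\cup\dd W$ and a point $z\in\dd W$ lies in $V\cup W$ precisely when $z\in V$, a motion started inside $W$ remains in $V\cup W$ until it exits $W$ at time $T_W$, at which instant it either lands on $\tilde\dd W^+=\dd W\cap V$ (and is now inside $V$, hence has \emph{not} exited $V\cup W$) or on $\dd W\setminus V\subseteq\dd(V\cup W)$ (and has exited); the symmetric statement holds with $V$ and $W$ interchanged. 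Starting the motion at a point $z_0\in V\cap W$ (nonempty by hypothesis, and permissible by Proposition \ref{prop:-For-,}), I set $\sigma_0=0$ and let $\sigma_1,\sigma_2,\dots$ be the successive exit times from $W,V,W,\dots$, each run via the strong Markov property from the previous endpoint, halting the construction the first time the landing point falls on $\dd(V\cup W)$ rather than on $\tilde\dd V^+\cup\tilde\dd W^+$. Writing $N$ for that halting index, the observation above gives $T_{V\cup W}=\sum_{k=1}^N(\sigma_k-\sigma_{k-1})$, and at each time $\sigma_{k-1}$ with $k\le N$ the process sits on $\tilde\dd W^+$ or $\tilde\dd V^+$, alternately.

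Next I would establish the geometric decay of $P(N\ge k)$. Because the landing points alternate, the motion must pass through $\tilde\dd V^+$ roughly every two steps, so the count of visits to $\tilde\dd V^+$ before exit controls $N$. From any $a\in\tilde\dd V^+$ the following increment is an exit from $W$, and by condition (iii) the probability of continuing, i.e. of landing again on $\tilde\dd W^+$ rather than exiting, is at most $q:=\sup_{a\in\tilde\dd V^+}P_a(B_{T_W}\in\tilde\dd W^+)<1$. Applying the strong Markov property at each visit to $\tilde\dd V^+$ then yields $P(N\ge k)\le c\,q^{k/2}$ for a suitable constant $c$; notice that only the one-sided bound (iii) is required, since the other type of transition (from $\tilde\dd W^+$ back to $\tilde\dd V^+$) need not be controlled.

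Finally I would assemble the moment bound. On the event $\{N\ge k\}$, the increment $\sigma_k-\sigma_{k-1}$ is, conditionally on $\mathcal F_{\sigma_{k-1}}$, an exit time from $W$ or from $V$ started at $B_{\sigma_{k-1}}\in\tilde\dd V^+\cup\tilde\dd W^+$, so by conditions (i) and (ii) its conditional $p$-th moment is at most $C:=\max\{\sup_{a\in\tilde\dd V^+}E_a(T_W^p),\ \sup_{a\in\tilde\dd W^+}E_a(T_V^p)\}<\infty$. Hence $E\big((\sigma_k-\sigma_{k-1})^p\,1_{\{N\ge k\}}\big)\le C\,P(N\ge k)\le C c\,q^{k/2}$ for $k\ge2$, while the first increment is controlled directly by the finiteness of $E_{z_0}(T_W^p)$. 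Writing $T_{V\cup W}=\sum_{k\ge1}(\sigma_k-\sigma_{k-1})1_{\{N\ge k\}}$ and applying the H\"older--Minkowski inequality (for $p\ge1$) or subadditivity of $t\mapsto t^p$ (for $p<1$), exactly as in the proof of Theorem \ref{bigguy}, gives $E(T_{V\cup W}^p)^{1/p}\le\sum_{k\ge1}\big(Cc\,q^{k/2}\big)^{1/p}$, a convergent geometric series, which finishes the proof.

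The main obstacle I anticipate is the careful bookkeeping in the first step: one must verify rigorously that the alternating exit times reconstruct $T_{V\cup W}$, i.e. that the motion cannot leave $V\cup W$ except by landing on the external part of $\dd V$ or $\dd W$ at one of the times $\sigma_k$, and that each landing point is almost surely correctly classified as internal ($\tilde\dd V^+\cup\tilde\dd W^+$) or external. Once this is pinned down, the geometric-decay and strong-Markov steps are routine. It is worth noting that the hypothesis that neither domain contains the other guarantees the internal boundaries are nonempty, so the decomposition is non-degenerate.
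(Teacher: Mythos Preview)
The paper does not give its own proof of this theorem: it is quoted verbatim as Theorem~3 of \citep{markowsky} and invoked as a black box in the proof of Proposition~6. So there is no in-paper argument to compare against directly.

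That said, your proposal is correct and is exactly the kind of argument one expects in the cited reference. The decomposition of $T_{V\cup W}$ into the alternating exit times $\sigma_k$ from $W,V,W,\dots$ is sound: on $[\sigma_{k-1},\sigma_k)$ the path lies in whichever of $V,W$ it is currently crossing, hence in $V\cup W$, and at $\sigma_k$ the landing point on $\dd W$ (resp.\ $\dd V$) is either in the open set $V$ (resp.\ $W$), placing it on $\tilde\dd W^+$ (resp.\ $\tilde\dd V^+$), or else outside $V\cup W$ and thus on $\dd(V\cup W)$. The geometric tail for $N$ via condition~(iii), the uniform bound on the conditional $p$-th moments of the increments via~(i) and~(ii), and the final summation via H\"older--Minkowski (or subadditivity for $p<1$) are all correct. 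Your observation that~(iii) is needed only on one side is also right: a single uniformly positive chance of termination every two steps suffices for geometric decay.

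It is worth noting that your argument is structurally the same as the paper's own proof of Theorem~\ref{bigguy}: there too the exit time is written as a telescoping sum of increments $\tau_{j+1}-\tau_j$, each increment has a uniformly bounded $p$-th moment (Lemma~\ref{pmoment_strip}), the survival probability $P(\tau_j<\tau)$ decays geometrically (Lemma~\ref{probexitstrip}), and Minkowski's inequality closes the argument. So even though the paper does not prove Theorem~\ref{flydag}, your approach is fully in the spirit of the methods it employs.
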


Here we take $V=\mathscr{M}^+_{x}$ and $W=\mathscr{M}^-_{x}$, and thus $\tilde \dd V$ and $\tilde \dd W$ are the line segments between $x_0 \pm i$ and between $x_1 \pm i$, respectively. It can then be shown using methods similar to those employed in the proof of Theorem \ref{bigguy} above that $(i)-(iii)$ hold; in particular, the construction used to prove Claim 1 above can be adapted to show that the suprema in $(i)-(iii)$ are all attained at points with imaginary part $0$. Details are omitted.

An anonymous referee has asked the following question:

\vspace{12pt}

{\bf Question:} Given $p<q$, can we construct a comb domain $\mathscr{M}_{x}$ with finite $p$-th moment but infinite $q$-th moment?

\vspace{12pt}

Unfortunately, our methods do not seem to be able to give lower bounds on the moments, so we do not know how to construct such a domain. We think it is a nice open problem, though, and have included it for this reason.
\section{Acknowledgements}

The authors would like to thank an anonymous referee for valuable comments, including a suggestion which led to a significant generalization in our results.

\bibliographystyle{plain}
\bibliography{Maheref}

\begin{thebibliography}{10}

\bibitem{banuelos2020bounds}
R.~Ba{\~n}uelos, P.~Mariano, and J.~Wang.
\newblock Bounds for exit times of {B}rownian motion and the first {D}irichlet
  eigenvalue for the {L}aplacian.
\newblock {\em arXiv:2003.06867}, 2020.

\bibitem{betsakos1998harmonic}
D.~Betsakos.
\newblock Harmonic measure on simply connected domains of fixed inradius.
\newblock {\em Arkiv f{\"o}r Matematik}, 36(2):275--306, 1998.

\bibitem{betsakos2001geometric}
D.~Betsakos.
\newblock Geometric theorems and problems for harmonic measure.
\newblock {\em Rocky Mountain Journal of Mathematics}, 31(3):773--795, 2001.

\bibitem{maxpmoment}
M.~Boudabra and G.~Markowsky.
\newblock Maximizing the $p$-th moment of the exit time of planar {B}rownian
  motion from a given domain.
\newblock {\em Journal of Applied Probability, to appear, arXiv:2001.08330},
  2020.

\bibitem{boudabra2020new}
M.~Boudabra and G.~Markowsky.
\newblock A new solution to the conformal {S}korokhod embedding problem and
  applications to the {D}irichlet eigenvalue problem.
\newblock {\em Journal of Mathematical Analysis and Applications},
  491(2):124351, 2020.

\bibitem{boudabra2019remarks}
M.~Boudabra and G.~Markowsky.
\newblock Remarks on {G}ross' technique for obtaining a conformal {S}korohod
  embedding of planar {B}rownian motion.
\newblock {\em Electronic Communications in Probability}, 2020.

\bibitem{burchard2001comparison}
A.~Burchard and M.~Schmuckenschl{\"a}ger.
\newblock Comparison theorems for exit times.
\newblock {\em Geometric \& Functional Analysis GAFA}, 11(4):651--692, 2001.

\bibitem{burkholder1977exit}
D.~Burkholder.
\newblock Exit times of {B}rownian motion, harmonic majorization, and {H}ardy
  spaces.
\newblock {\em Advances in Mathematics}, 26(2):182--205, 1977.

\bibitem{davis1994moments}
B.~Davis and B.~Zhang.
\newblock Moments of the lifetime of conditioned {B}rownian motion in cones.
\newblock {\em Proceedings of the American Mathematical Society},
  121(3):925--929, 1994.

\bibitem{dryden2017exit}
E.~Dryden, J.~Langford, and P.~McDonald.
\newblock Exit time moments and eigenvalue estimates.
\newblock {\em Bulletin of the London Mathematical Society}, 49(3):480--490,
  2017.

\bibitem{gross2019conformal}
R.~Gross.
\newblock A conformal {S}korokhod embedding.
\newblock {\em Electronic Communications in Probability}, 24(68):1--11, 2019.

\bibitem{hansen}
L.~Hansen.
\newblock Hardy classes and ranges of functions.
\newblock {\em The Michigan Mathematical Journal}, 17(3):235--248, 1970.

\bibitem{helmes1999computing}
K~Helmes.
\newblock Computing moments of the exit distribution for diffusion processes
  using linear programming.
\newblock In {\em Operations Research Proceedings 1998}, pages 231--240.
  Springer, 1999.

\bibitem{helmes2001computing}
K.~Helmes, S.~R{\"o}hl, and R.~Stockbridge.
\newblock Computing moments of the exit time distribution for {M}arkov
  processes by linear programming.
\newblock {\em Operations Research}, 49(4):516--530, 2001.

\bibitem{hurtado2012comparison}
A.~Hurtado, S.~Markvorsen, and V.~Palmer.
\newblock Comparison of exit moment spectra for extrinsic metric balls.
\newblock {\em Potential Analysis}, 36(1):137--153, 2012.

\bibitem{hurtado2016estimates}
A.~Hurtado, S.~Markvorsen, and V.~Palmer.
\newblock Estimates of the first {D}irichlet eigenvalue from exit time moment
  spectra.
\newblock {\em Mathematische Annalen}, 365(3-4):1603--1632, 2016.

\bibitem{karafyllia2019property}
C.~Karafyllia.
\newblock On a property of harmonic measure on simply connected domains.
\newblock {\em Canadian Journal of Mathematics}, pages 1--22, 2019.

\bibitem{karafyllia2019relation}
C.~Karafyllia.
\newblock On a relation between harmonic measure and hyperbolic distance on
  planar domains.
\newblock {\em Indiana University Mathematics Journal, to appear,
  arXiv:1908.11830}, 2019.

\bibitem{karafyllia2019hardy}
C.~Karafyllia.
\newblock On the {H}ardy number of a domain in terms of harmonic measure and
  hyperbolic distance.
\newblock {\em arXiv:1908.11845}, 2019.

\bibitem{kim2020quantitative}
D.~Kim.
\newblock Quantitative inequalities for the expected lifetime of {B}rownian
  motion.
\newblock {\em Michigan Mathematical Journal}, 2020.

\bibitem{kinateder1999variational}
K.~Kinateder and P.~McDonald.
\newblock Variational principles for average exit time moments for diffusions
  in {E}uclidean space.
\newblock {\em Proceedings of the American Mathematical Society},
  127(9):2767--2772, 1999.

\bibitem{kinateder1998exit}
K.~Kinateder, P.~McDonald, and D.~Miller.
\newblock Exit time moments, boundary value problems, and the geometry of
  domains in {E}uclidean space.
\newblock {\em Probability Theory and Related Fields}, 111(4):469--487, 1998.

\bibitem{li2003first}
W.~Li.
\newblock The first exit time of a {B}rownian motion from an unbounded convex
  domain.
\newblock {\em Annals of Probability}, 31(2):1078--1096, 2003.

\bibitem{mariano2020conformal}
P.~Mariano and H.~Panzo.
\newblock Conformal {S}korokhod embeddings and related extremal problems.
\newblock {\em Electronic Communications in Probability}, 25, 2020.

\bibitem{markowsky}
G.~Markowsky.
\newblock The exit time of planar {B}rownian motion and the
  {P}hragm{\'e}n--{L}indel{\"o}f principle.
\newblock {\em Journal of Mathematical Analysis and Applications},
  422(1):638--645, 2015.

\bibitem{markowsky2018remark}
G.~Markowsky.
\newblock A remark on the probabilistic solution of the {D}irichlet problem for
  simply connected domains in the plane.
\newblock {\em Journal of Mathematical Analysis and Applications},
  464(2):1143--1146, 2018.

\bibitem{mcdonald2013exit}
P.~McDonald.
\newblock Exit times, moment problems and comparison theorems.
\newblock {\em Potential Analysis}, 38(4):1365--1372, 2013.

\bibitem{mendez}
P.~M{\'e}ndez-Hern{\'a}ndez.
\newblock {B}rascamp-{L}ieb-{L}uttinger inequalities for convex domains of
  finite inradius.
\newblock {\em Duke Mathematical Journal}, 113(1):93--131, 2002.

\end{thebibliography}

\end{document}